\documentclass{amsart}
\usepackage{mathptmx, bm, amssymb, amscd, enumerate, colonequals, url, color}
\definecolor{chianti}{rgb}{0.6,0,0}
\definecolor{meretale}{rgb}{0,0,.6}
\definecolor{leaf}{rgb}{0,.35,0}
\usepackage[colorlinks=true, pagebackref, hyperindex, citecolor=meretale, urlcolor=leaf, linkcolor=chianti]{hyperref}

\newtheorem{theorem}{Theorem}[section]
\newtheorem{lemma}[theorem]{Lemma}

\theoremstyle{definition}

\newtheorem{example}[theorem]{Example}
\newtheorem{remark}[theorem]{Remark}
\newtheorem{conjecture}[theorem]{Conjecture}
\newtheorem{problem}[theorem]{Problem}
\newtheorem{question}[theorem]{Question}
\numberwithin{equation}{theorem}

\def\ge{\geqslant}
\def\le{\leqslant}
\def\phi{\varphi}
\def\bar{\overline}

\def\into{\lhook\joinrel\longrightarrow}
\def\mapsto{\longmapsto}

\def\gcd{\operatorname{gcd}}
\def\height{\operatorname{height}}

\def\rad{\operatorname{rad}}
\def\Res{\operatorname{Res}}

\def\fraka{\mathfrak{a}}
\def\frakb{\mathfrak{b}}
\def\frakm{\mathfrak{m}}

\def\CC{\mathbb{C}}

\def\FFp{\mathbb{F}_{\!p}}
\def\NN{\mathbb{N}}

\def\QQ{\mathbb{Q}}

\def\ZZ{\mathbb{Z}}

\def\calZ{\mathcal{Z}}
\def\bsz{{\boldsymbol{z}}}
\def\real{\operatorname{Re}}

\minCDarrowwidth20pt

\begin{document}
\title{Ideals generated by power sums}

\author{Aldo Conca}
\address{Dipartimento di Matematica, Dipartimento di Eccellenza 2023-2027, Universit\'a di Genova, Italy}
\email{aldo.conca@unige.it}

\author{Anurag K. Singh}
\address{Department of Mathematics, University of Utah, 155 South 1400 East, Salt Lake City, UT~84112, USA}
\email{singh@math.utah.edu}

\author{Kannan Soundararajan}
\address{Department of Mathematics, Stanford University, 450 Serra Mall, Stanford CA~94305, USA}
\email{ksound@stanford.edu}

\thanks{A.C. is supported by PRIN~2020355B8Y ``Squarefree Gr\"obner degenerations, special varieties and related topics,'' by MIUR Excellence Department Project awarded to the Dept.~of Mathematics, Univ.~of Genova, CUP~D33C23001110001, and by INdAM-GNSAGA; A.K.S. is supported by NSF grants DMS~2101671 and DMS~2349623; K.S. is supported by a Simons Investigator award from the Simons Foundation, and by NSF grant DMS~2100933. A.C. and A.K.S. were also supported by NSF grant DMS~1928930 and by Alfred P. Sloan Foundation grant G-2021-16778, while in residence at SLMath/MSRI, Berkeley, during the Spring 2024 Commutative Algebra program.}

\dedicatory{Dedicated to the memory of Lucian B\u{a}descu}

\begin{abstract}
We consider ideals in a polynomial ring generated by collections of power sum polynomials, and obtain conditions under which these define complete intersection rings, normal domains, and unique factorization domains. We also settle a key case of a conjecture of Conca, Krattenthaler, and Watanabe, and prove other results in that direction.
\end{abstract}
\maketitle

\section{Introduction}

Let $S\colonequals K[x_1,\dots, x_n]$ be a polynomial ring over a field $K$. For a positive integer~$a$, we use~$p_a$ to denote the power sum $x_1^a+\dots+x_n^a$. If $K$ has characteristic zero and~$a_1,a_2,\dots, a_n$ are distinct positive integers, the Jacobian criterion shows that $p_{a_1}, \dots, p_{a_n}$ are algebraically independent polynomials over $K$; the problem of determining when $n+1$ power sums generate the field of symmetric rational functions in~$x_1,\dots,x_n$ over $K$ is settled in~\cite{DZ}. In a different direction, the following is studied in~\cite{CKW}:

\begin{problem}
\label{problem:ci}
Characterize the sets $A\colonequals\{a_1,a_2,\dots, a_n\}$ of positive integers such that the corresponding power sums $p_{a_1}, \dots, p_{a_n}$ form a regular sequence in the polynomial ring $S$.
\end{problem}

The base field is taken to be $\CC$ in~\cite{CKW}, but the problem makes sense more generally.

\begin{remark}
\label{remark:intro}
We record some straightforward observations; some of these are proved in~\cite{CKW} in the case $K=\CC$, but the proofs are readily adapted to the more general setting.
\begin{enumerate}[\quad\rm(1)]
\item Whether $p_{a_1}, \dots, p_{a_n}$ is a regular sequence is unaffected by enlarging $K$, so one may assume that the base field $K$ is algebraically closed.

\item Set $d\colonequals\gcd(a_1,a_2,\dots,a_n)$. It is readily seen that $p_{a_1},\dots,p_{a_n}$ is a regular sequence precisely if $p_{a_1/d},\dots,p_{a_n/d}$ is a regular sequence. Thus, in studying Problem~\ref{problem:ci}, one may assume that $\gcd(a_1,a_2,\dots,a_n)=1$.

\item A necessary condition for $p_{a_1}, \dots, p_{a_n}$ to be a regular sequence is that $n!$ divides the product $a_1a_2\cdots a_n$.

\item If the characteristic of $K$ is either $0$ or strictly greater than $n$, and $a_1,\dots,a_n$ are consecutive positive integers, then $p_{a_1}, \dots, p_{a_n}$ is a regular sequence.

\item If $p_{a_1},\dots,p_{a_n}$ form a regular sequence in $\CC[x_1,\dots, x_n]$, then they form a regular sequence in $\FFp[x_1,\dots, x_n]$ for sufficiently large prime integers $p$. However, finding optimal bounds for such primes appears hard; for example, $p_1,p_6,p_{100}$ is a regular sequence in $\CC[x_1,x_2,x_3]$, but is not a regular sequence in~$\FFp[x_1,x_2,x_3]$ for the prime integer $p=4594399$.

\item Problem~\ref{problem:ci} is easily answered for $n=2$: polynomials $p_a,p_b$ form a regular sequence in $K[x_1,x_2]$ if and only if the characteristic of $K$ differs from $2$, and either~$a/\gcd(a,b)$ or $b/\gcd(a,b)$ is even.
\end{enumerate}
\end{remark}

Problem~\ref{problem:ci} is unresolved for $n=3$; the following is \cite[Conjecture~2.10]{CKW}:

\begin{conjecture}
\label{conjecture:6abc}
Suppose $n=3$, the characteristic of the field $K$ is zero, and that $a,b,c$ are integers with $0<a<b<c$ and $\gcd(a,b,c)=1$.
Then $p_a,p_b,p_c$ is a regular sequence if and only if $6$ divides $abc$.
\end{conjecture}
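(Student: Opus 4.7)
The plan is to prove both directions by studying the common zero locus $V(p_a,p_b,p_c)\subseteq\bar K^3$. By Remark~\ref{remark:intro}(1) we may assume $K$ is algebraically closed, and by the Lefschetz principle it suffices to treat $K=\CC$; then $p_a,p_b,p_c$ form a regular sequence if and only if this zero locus is set-theoretically $\{0\}$.

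The necessity of $6\mid abc$ is a special case of Remark~\ref{remark:intro}(3); equivalently, when $6\nmid abc$ one exhibits a nonzero common zero. If $3\nmid abc$, then $3\nmid a$, $3\nmid b$, $3\nmid c$, and $(1,\omega,\omega^2)$ with $\omega=e^{2\pi i/3}$ satisfies $p_k=1+\omega^k+\omega^{2k}=0$ for each $k\in\{a,b,c\}$; if $2\nmid abc$ (equivalently, $a,b,c$ are all odd), then $(1,-1,0)$ is a nonzero common zero.

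For sufficiency, assume $6\mid abc$ and $\gcd(a,b,c)=1$, and suppose for contradiction that $\bsz=(z_1,z_2,z_3)\neq 0$ is a common zero. If two coordinates of $\bsz$ vanish, so does the third; and if exactly one vanishes, say $z_3=0$, then $\zeta\colonequals z_2/z_1$ satisfies $\zeta^a=\zeta^b=\zeta^c=-1$, so the order of $\zeta$ divides $2\gcd(a,b,c)=2$. Since $\zeta\neq 1$, this order equals $2$ and $\zeta=-1$, forcing $a,b,c$ all odd — contradicting $2\mid abc$.

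The principal obstacle is the case where all $z_i$ are nonzero. After scaling to $z_3=1$ and writing $u=z_1$, $v=z_2$, one must show that the system $u^k+v^k+1=0$ for $k\in\{a,b,c\}$ has no solution in $(\bar K^*)^2$. Setting $s=u+v$ and $q=uv\neq 0$, Newton's recurrence $P_{k+2}=sP_{k+1}-qP_k$ (with $P_0=2$, $P_1=s$) expresses each $P_k=u^k+v^k$ as a polynomial in $(s,q)$, and the system becomes $P_a(s,q)=P_b(s,q)=P_c(s,q)=-1$, cutting out a generically finite subscheme of $\bar K^2$. I would then use the identity $u^{2k}+u^k+q^k=0$ — so that $u^k$ is a root of $T^2+T+q^k$ for each $k\in\{a,b,c\}$ — and analyze the compatibility of these three constraints on a single $u$. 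The expected conclusion is that compatibility forces $(u,v)=(\omega,\omega^2)$ (up to swap), which requires $3\nmid abc$ and therefore contradicts $3\mid abc$. Making this compatibility analysis uniform across all admissible triples $(a,b,c)$ is the crux; I anticipate it will rely on a careful resultant or cyclotomic computation, and this is presumably the step at which the paper isolates only a ``key case'' rather than the full conjecture.
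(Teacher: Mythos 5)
Your preliminary reductions and the necessity direction are correct, and your reformulation of the crux --- that $u^k+v^k+1=0$ for $k\in\{a,b,c\}$ with $u,v\in\bar K^\times$ should force $u,v$ to be primitive cube roots of unity --- matches exactly the form in which the paper and \cite{MSW} state the problem. But as you yourself flag, you stop at the point where a proof is needed, and the gap is genuine: Conjecture~\ref{conjecture:6abc} is an \emph{open} conjecture, and the paper does not prove it in full. What the paper actually establishes is Theorem~\ref{theorem:a=1} (the case $a=1$, in the stronger zero-locus form of \cite[Conjecture~12]{MSW}) together with Theorem~\ref{theorem:abc:almost:all}, which shows that for each fixed $a\ge 2$ there are only finitely many candidate triples $(a,b,c)$ possibly violating the conjecture.

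It is worth seeing why the purely algebraic route you sketch is unlikely to close even the case $a=1$, and what the paper uses instead. For $a=1$ one substitutes $y=-1-x$, so the problem becomes: the polynomials $1+x^b+(-1-x)^b$ and $1+x^c+(-1-x)^c$ have no common root apart from $0$, $-1$, $\omega$, $\omega^2$. Even here the statement is \emph{not} the nonvanishing of a resultant in any straightforward sense, and the proof is transcendence-theoretic. Lemma~\ref{lemma:trivial:factors} locates the nontrivial zeros of $1+x^n+(-1-x)^n$ on three explicit curves; Lemma~\ref{lemma:14:9} exploits the integrality of $2\gcd(Q_b,Q_c)$ at $\omega$ to force any putative common zero to be reasonably large in modulus; Lemma~\ref{lemma:c:small} uses a logarithm expansion to rule out small $c$; and the large-$c$ range is handled by Lemma~\ref{lemma:LMN}, a lower bound for $|\alpha^k-1|$ from the Laurent--Mignotte--Nesterenko theory of linear forms in two logarithms. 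The case $b$, $c$ both odd invokes Beukers's theorem, and a short list of remaining small $b$ requires further computation, including a continued-fraction check. The identity $u^{2k}+u^k+q^k=0$ you write down is correct, and it does resurface in the proof of Lemma~\ref{lemma:largest:absolute} as the basis for a resultant lower bound; but it is only a first step. To make progress along your lines you would need, at minimum, analogues of these height and Diophantine estimates --- and even with them the paper reaches only Theorem~\ref{theorem:abc:almost:all}, not the conjecture itself.
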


One direction holds more generally, as recorded in Remark~\ref{remark:intro}. The conjecture is proven for certain special values of $a,b,c$ in \cite{CKW}; the case $a=1$ is completely settled in~\S\ref{section:a=1} of the present paper, while in~\S\ref{section:abc:general} we prove that for each fixed positive integer~$a$, there are at most finitely many triples $(a,b,c)$ that possibly violate Conjecture~\ref{conjecture:6abc}.

In \cite[Conjecture~12]{MSW} the authors extend Conjecture~\ref{conjecture:6abc} to a statement about the zero loci of $p_a,p_b,p_c$, under the assumption that $\gcd(a,b,c)=1$, and verify their conjecture computationally for $a+b+c\le 300$; we prove this stronger conjecture in the case $a=1$.

In general, for distinct integers with $\gcd(a_1,a_2,\dots,a_n)=1$ and $n!$ dividing~$a_1a_2\cdots a_n$, the elements $p_{a_1},\dots,p_{a_n}$ need not form a regular sequence. Consider for example the case where $n=4$, and take $p_{a_1},\dots,p_{a_4}$ in $S\colonequals\CC[x_1,x_2,x_3,x_4]$. Let $\nu_2$ denote the $2$-adic valuation on $\ZZ\smallsetminus\{0\}$. If each $\nu_2(a_i)$ is either $0$ or $k$, for $k$ a fixed positive integer, then
\[
(p_{a_1},\dots,p_{a_4})\ \subseteq\ \big(x_1+x_2,\ x_3+x_4,\ x_1^{2^k}+x_3^{2^k}\big),
\]
which justifies condition~\eqref{conjecture:4abcd:2} in the conjecture below. For~\eqref{conjecture:4abcd:3}, note that $p_5\in(p_1,p_2)S$ by Remark~\ref{remark:codimension}, and consequently $p_{5d}\in(p_d,p_{2d})S$ for each positive integer $d$. A similar argument shows that $p_5\in(p_1,p_3)S$, so the set $A$ does not contain a subset of the form $\{d,3d,5d\}$; this condition, however, is implied by the others. The three conditions in the conjecture below are necessary and independent, see \cite[Remark~2.16]{CKW}.

\begin{conjecture}(\cite[Conjecture~2.15]{CKW})
\label{conjecture:4abcd}
Suppose that $n=4$ and that $K$ has characteristic zero. Let $A\colonequals\{a_1,a_2,a_3,a_4\}$ where $\gcd(a_1,a_2,a_3,a_4)=1$. Then $p_{a_1}, p_{a_2}, p_{a_3}, p_{a_4} $ is a regular sequence if and only if $A$ satisfies the following conditions:

\begin{enumerate}[\quad\rm(1)]
\item\label{conjecture:4abcd:1} The product $a_1a_2a_3a_4$ is a multiple of $24$;

\item\label{conjecture:4abcd:2} the set $\{\nu_2(a_i)\mid a_i\in A\}$ contains at least two distinct positive integers;

\item\label{conjecture:4abcd:3} the set $A$ does not contain a subset of the form $\{d,2d,5d\}$ for any $d\in \NN$.
\end{enumerate}
\end{conjecture}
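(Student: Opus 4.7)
The plan is to invoke the standard homogeneous regular sequence criterion: in $S = \CC[x_1, x_2, x_3, x_4]$, four homogeneous polynomials form a regular sequence if and only if their common zero locus in $\CC^4$ is just $\{0\}$. Necessity of the three conditions is already discussed in the excerpt, so I would concentrate on sufficiency: assuming $(1)$, $(2)$, $(3)$, every common zero $(x_1, x_2, x_3, x_4) \in \CC^4$ of $p_{a_1}, \dots, p_{a_4}$ must vanish. After permuting coordinates and collecting equal entries, write the multiset $\{x_1,\dots,x_4\}$ as $0$ occurring with multiplicity $m_0$ together with distinct nonzero values $z_1, \dots, z_k$ occurring with multiplicities $m_1, \dots, m_k$, where $m_0 + m_1 + \cdots + m_k = 4$. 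The vanishing hypotheses become $\sum_{j=1}^{k} m_j z_j^{a} = 0$ for every $a \in A$, and the goal is to force $k = 0$.

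I would proceed by casework on $k$. The case $k = 1$ gives $m_1 z_1^a = 0$, forcing $m_1 = 0$. For $k = 2$, dividing by $z_1^{a_i}$ yields $m_1 + m_2 w^{a_i} = 0$ with $w = z_2/z_1$, so taking ratios gives $w^{a_i - a_j} = 1$ for all $i, j$; thus $w$ is a root of unity whose order divides every difference $a_i - a_j$. Combined with the constraints $m_1, m_2 \in \{1, 2, 3\}$ summing to at most $4$, and with the $2$-adic analysis available via condition~$(2)$, this configuration should be ruled out. For $k = 3$ and $k = 4$, the four equations recast as the singularity of a $4 \times k$ generalized Vandermonde matrix $\bigl(z_j^{a_i}\bigr)$ against the positive integer weight vector $(m_1,\dots,m_k)$. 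In the $k = 4$ case the $4 \times 4$ determinant $\det\bigl(z_j^{a_i}\bigr)$ must vanish; one would enumerate the residue patterns of the $a_i$ modulo small integers that permit this, then invoke condition~$(3)$ --- which encodes the fact that $p_5 \in (p_1, p_2)S$ and $p_5 \in (p_1, p_3)S$ --- to eliminate the remaining patterns. For $k = 3$ the rank condition yields explicit polynomial relations among the $z_j$ that should succumb to the same mixture of $2$-adic valuation and root-of-unity arguments.

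The principal obstacle is the detailed combinatorial classification of admissible $(k; a_1, \dots, a_4)$ patterns in the Vandermonde analysis, especially for $k = 3$ and $k = 4$. By analogy with the treatment of Conjecture~\ref{conjecture:6abc} in \S\ref{section:abc:general}, one can reasonably hope first to reduce to a finiteness statement --- for each fixed $a_1$, only finitely many quadruples can violate the conjecture --- and then handle the exceptional configurations either by number-theoretic input on vanishing sums of roots of unity, or by direct computation. The genuinely hard step, and likely the reason the conjecture remains open in general, is ruling out infinite families of near-vanishing $4 \times 4$ Vandermonde relations uniformly rather than one $a_1$ at a time.
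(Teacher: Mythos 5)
This statement is \emph{Conjecture}~\ref{conjecture:4abcd}, not a theorem: the paper restates it from \cite{CKW}, sketches why each of conditions~\eqref{conjecture:4abcd:1}--\eqref{conjecture:4abcd:3} is necessary (the containment $(p_{a_1},\dots,p_{a_4})\subseteq(x_1+x_2,\,x_3+x_4,\,x_1^{2^k}+x_3^{2^k})$ for~\eqref{conjecture:4abcd:2}, and $p_{5d}\in(p_d,p_{2d})$ for~\eqref{conjecture:4abcd:3}), and cites~\cite[Remark~2.16]{CKW} for independence --- but it offers no proof of sufficiency, and the problem is left open. So there is no ``paper's own proof'' to compare against; you are attempting something the authors explicitly do not claim to have.

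Your reduction is the right starting point, and it is essentially the same framework the paper uses when it \emph{does} make progress in the $n=3$ case (Sections~\ref{section:a=1} and~\ref{section:abc:general}): replace the regular-sequence condition by the condition that the common zero locus is $\{0\}$, stratify a putative nonzero common zero by the multiset of distinct nonzero coordinates, and convert the power-sum vanishing into weighted generalized Vandermonde relations. Your $k=1$ case is immediate. Your $k=2$ case is sound but understated: from $m_1+m_2 w^{a_i}=0$ for all $i$ you should conclude first that $|w|^{a_i}=m_1/m_2$ is independent of $i$, forcing $|w|=1$ and $m_1=m_2$, hence $w^{a_i}=-1$ for every $i$; this in turn forces all $\nu_2(a_i)$ to be equal, which contradicts condition~\eqref{conjecture:4abcd:2}. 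Make that chain explicit rather than gesturing at ``$2$-adic analysis.''

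The genuine gap is exactly where you locate it: the cases $k=3$ and $k=4$. Saying one ``would enumerate the residue patterns of the $a_i$ modulo small integers that permit this'' is not a proof strategy with any teeth --- for a $4\times 4$ generalized Vandermonde determinant $\det(z_j^{a_i})$ with arbitrary unit-modulus (or not) $z_j$, there is no known finite list of local congruence obstructions that characterizes vanishing, and condition~\eqref{conjecture:4abcd:3} only encodes the two specific syzygies $p_5\in(p_1,p_2)$ and $p_5\in(p_1,p_3)$. The paper's own partial results for $n=3$ (Theorems~\ref{theorem:a=1} and~\ref{theorem:abc:almost:all}) already require Baker-type lower bounds for linear forms in logarithms (Lemma~\ref{lemma:LMN}) plus case-by-case computation, and even there the full Conjecture~\ref{conjecture:6abc} is not settled. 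For $n=4$ the analogous diophantine problem has two free complex parameters rather than one, and no uniform method is offered here or in the literature cited. So your proposal correctly diagnoses the obstruction but, as you acknowledge, does not overcome it; it should not be presented as a proof, and the paper does not present one either.
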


\section{Primality, normality, and factoriality}

The discussion thus far concerned when power sums $p_{a_1},\dots,p_{a_n}$ form a regular sequence in $K[x_1,\dots,x_n]$. It is also natural to ask:

\begin{question}
\label{question:prime:normal}
For a set of positive integers $A\colonequals\{a_1,\dots,a_c\}$, let $p_A$ denote the sequence of power sum polynomials $p_{a_1},\dots, p_{a_c}$ in $S\colonequals K[x_1,\dots,x_n]$, and let $I_A\colonequals(p_A)$ denote the corresponding ideal of $S$.
\begin{enumerate}[\quad\rm(1)]
\item When is $p_A$ a regular sequence, equivalently when is the ideal $I_A$ a complete intersection of codimension $c$?
\item When is $S/I_A$ a normal domain?
\item When is $S/I_A$ a unique factorization domain?
\item When is the ideal $I_A$ radical?
\item When is the ideal $I_A$ prime?
\end{enumerate}
\end{question}

\begin{remark}
\label{remark:codimension}
The specification ``of codimension $c$'' in (1) is relevant; in general, the elements $p_{a_1},\dots, p_{a_c}$ need not be minimal generators of $I_A$. For example, when $n\le 4$, the polynomials $p_1,p_2,p_3,p_4$ generate the ring of symmetric polynomials; degree considerations then imply that $p_5$ is a $K$-linear combination of $p_1^5$, $p_1^3p_2$, $p_1^2p_3$, $p_1p_2^2$, $p_1p_4$, and~$p_2p_3$, so~$p_5$ is an element of the ideal $(p_1,p_2)$. Hence $(p_1,p_2,p_5)=(p_1,p_2)$ is a complete intersection ideal, though not of codimension $3$. The same argument shows as well that $p_5$ must be an element of the ideal $(p_1,p_3)$.
\end{remark}

While we do not pursue it here, one may consider analogues of these questions for other families of symmetric polynomials such as complete symmetric polynomials or elementary symmetric polynomials; see for example \cite[Conjecture~2.17]{CKW}.

\begin{theorem}
\label{theorem:n:large}
For distinct positive integers $a_1,\dots,a_c$ consider the ideal $I_A\colonequals(p_{a_1},\dots, p_{a_c})$ in the polynomial ring $S\colonequals\CC[x_1,\dots,x_n]$.
\begin{enumerate}[\quad\rm(1)]
\item\label{theorem:n:large:ci} If $n\ge 2c-1$, then the ideal $I_A$ is a complete intersection of codimension $c$.

\item\label{theorem:n:large:normal} If $n\ge 2c+1$, then $S/I_A$ is a normal domain.

\item\label{theorem:n:large:ufd} If $n\ge 2c+3$, then $S/I_A$ is a unique factorization domain.

\item\label{theorem:n:large:radical} If $n\ge 2c$, then the ring $S/I_A$ is reduced.
\end{enumerate}
\end{theorem}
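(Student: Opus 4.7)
The plan is to deduce all four parts from a single dimension estimate on the locus where the Jacobian matrix of $p_{a_1},\dots,p_{a_c}$ drops rank. Let $\Sigma\subseteq\CC^n$ denote the subvariety consisting of points at which the $c\times n$ Jacobian $\bigl[a_i x_j^{a_i-1}\bigr]$ has rank strictly less than $c$.

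The main technical input is the estimate $\dim\Sigma\le c-1$. A point $z\in\CC^n$ lies in $\Sigma$ precisely when the column vectors $(z_j^{a_1-1},\dots,z_j^{a_c-1})\in\CC^c$, for $j=1,\dots,n$, span a proper linear subspace of $\CC^c$, equivalently when there exists $[\phi]\in\PP^{c-1}$ with $f_\phi(z_j)=0$ for every $j$, where $f_\phi(t)\colonequals\sum_{i=1}^{c}\phi_i t^{a_i-1}$. I would work with the incidence variety
\[
\mathcal{I}\colonequals\bigl\{(z,[\phi])\in\CC^n\times\PP^{c-1}\ :\ f_\phi(z_j)=0\text{ for }j=1,\dots,n\bigr\}.
\]
Since the exponents $a_i-1$ are pairwise distinct, $f_\phi$ is a nonzero univariate polynomial for each $[\phi]\in\PP^{c-1}$, so it has only finitely many roots. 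Hence the projection $\mathcal{I}\to\PP^{c-1}$ has finite fibers, giving $\dim\mathcal{I}\le c-1$, and $\Sigma$ is the image of $\mathcal{I}$ under the other projection to $\CC^n$.

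With this bound in hand, parts~\eqref{theorem:n:large:ci}, \eqref{theorem:n:large:normal}, and~\eqref{theorem:n:large:radical} follow via the Jacobian criterion and Serre's conditions. For~\eqref{theorem:n:large:ci}, any irreducible component $W$ of $V(I_A)$ with $\dim W>n-c$ could contain no smooth point of $V(I_A)$ (at such a point the local dimension is $n-c$), so $W\subseteq\Sigma$; but $n\ge 2c-1$ yields $c-1\le n-c$, contradicting $\dim W>n-c$. Together with the Krull bound $\dim V(I_A)\ge n-c$, this forces every component to have codimension exactly~$c$, so $I_A$ is a complete intersection and $S/I_A$ is Cohen-Macaulay. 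Since $\operatorname{Sing}(V(I_A))\subseteq V(I_A)\cap\Sigma$ has codimension at least $n-2c+1$ in $V(I_A)$, Serre's $R_0$ condition holds for $n\ge 2c$ and $R_1$ for $n\ge 2c+1$; combined with the $S_k$ conditions from Cohen-Macaulayness, these give reducedness and normality.

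Part~\eqref{theorem:n:large:ufd} is the main obstacle. For $n\ge 2c+3$ the singular locus of $V(I_A)$ has codimension at least~$4$, so $X\colonequals\operatorname{Proj}(S/I_A)\subset\PP^{n-1}$ is a normal complete intersection of dimension $\ge 3$ whose singular locus has codimension $\ge 4$ in~$X$. The plan is to invoke a Grothendieck-Lefschetz theorem valid for complete intersections with a small singular locus (as in SGA~2, Exp.~XI, or the parafactoriality framework of Call-Lyubeznik) to conclude $\operatorname{Pic}(X)=\ZZ\cdot\mathcal{O}_X(1)$, and then apply the standard exact sequence for graded normal domains that relates $\operatorname{Cl}(S/I_A)$ to $\operatorname{Pic}(X)$ modulo the class of $\mathcal{O}_X(1)$ to deduce $\operatorname{Cl}(S/I_A)=0$, so $S/I_A$ is a UFD. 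Identifying the precise form of the Lefschetz-type statement needed in the mildly singular setting is the most delicate technical point of the argument.
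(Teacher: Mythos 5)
Your incidence-variety argument for $\dim\Sigma\le c-1$ is correct and a genuinely different route from the paper's, which instead bounds $\height I_c(J)$ from below by a lex Gr\"obner-basis computation: the leading terms of the size-$c$ minors of $\bigl[x_j^{a_i-1}\bigr]$ are the monomials $x_{i_1}^{a_1-1}\cdots x_{i_c}^{a_c-1}$, whose radicals generate a monomial ideal of height $n-c+1$. Both routes produce the same dimension bound on the degeneracy locus. Your deductions of~\eqref{theorem:n:large:ci}, \eqref{theorem:n:large:normal}, and \eqref{theorem:n:large:radical} from it are correct and, in one respect, tidier than the paper's: the paper proves \eqref{theorem:n:large:ci} by induction on $c$, deducing that $p_{a_c}$ is a nonzerodivisor from the fact that $S/(p_{a_1},\dots,p_{a_{c-1}})$ is a domain, which tacitly requires $p_{a_c}\notin(p_{a_1},\dots,p_{a_{c-1}})$; your height-$c$ argument gets this for free. (For~\eqref{theorem:n:large:normal} you should still record that a positively graded normal ring with $\CC$ in degree zero has no nontrivial idempotents and is therefore a domain, but this is routine.)

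Part~\eqref{theorem:n:large:ufd} is where the proposal has a genuine gap, one that you have half-sensed. The exact sequence for the punctured cone over $X\colonequals\operatorname{Proj}(S/I_A)$ reads
\[
0\ \to\ \ZZ\cdot[\mathcal{O}_X(1)]\ \to\ \operatorname{Cl}(X)\ \to\ \operatorname{Cl}(S/I_A)\ \to\ 0,
\]
and it involves the Weil divisor class group $\operatorname{Cl}(X)$, not $\operatorname{Pic}(X)$. When $X$ is singular the inclusion $\operatorname{Pic}(X)\into\operatorname{Cl}(X)$ is in general strict, so a Lefschetz theorem giving $\operatorname{Pic}(X)=\ZZ\cdot\mathcal{O}_X(1)$ does not by itself yield $\operatorname{Cl}(X)=\ZZ\cdot\mathcal{O}_X(1)$ and hence does not give $\operatorname{Cl}(S/I_A)=0$; one would also need $X$ to be locally factorial, which is close to what one is trying to prove. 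The clean route, and the one the paper takes, is to apply the local parafactoriality theorem of Grothendieck \cite[Corollaire~XI.3.14]{SGA2} directly to the ring: a complete intersection local ring of dimension at least $4$ satisfying $(R_3)$ is a UFD. Here $(S/I_A)_\frakm$ has dimension $n-c\ge c+3\ge 4$ and satisfies $(R_3)$ by your singular-locus bound, so it is a UFD, and the usual graded-to-local transfer of class groups then gives factoriality of $S/I_A$. Your instinct to reach for SGA~2, Exp.~XI, or Call--Lyubeznik was exactly right; the fix is simply to invoke the local statement directly rather than detouring through $\operatorname{Pic}$ of the projective variety.
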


Before proceeding with the proof, we note that the bounds in the theorem are optimal:

\begin{example}
\label{example:n:large}
(1) Suppose $n=2c-2$, take $A\colonequals\{1,3,5,\dots,2c-1\}$. Then $|A|=c$ but the ideal $I_A$ has height at most $c-1$ since
\[
I_A\ \subseteq\ \big(x_1+x_2,\ x_3+x_4,\ \dots,\ x_{2c-3}+x_{2c-2}\big).
\]
Indeed, the height $c-1$ ideal displayed on the right contains $p_a$ for each odd integer $a$.

(2) We show that $I_A$ need not be prime in the case $n=2c$. If $c=1$, the ideal $(p_2)$ is not prime; if $c\ge 2$, consider once again $A\colonequals\{1,3,5,\dots,2c-1\}$ with $|A|=c$, in which case
\[
I_A\ \subsetneq\ \big(x_1+x_2,\ x_3+x_4,\ \dots,\ x_{2c-1}+x_{2c}\big).
\]
Since $\height I_A=c$ by Theorem~\ref{theorem:n:large}\,\eqref{theorem:n:large:ci}, each ideal above has height $c$, so $I_A$ is not prime.

(3) Suppose $n=2c+2$, take $A\colonequals\{2,6,10,\dots,4c-2\}$. Then $|A|=c$ and $S/I_A$ is a normal domain of dimension $c+2$ by Theorem~\ref{theorem:n:large}\,\eqref{theorem:n:large:ufd}. It is however not a unique factorization domain: setting $i\colonequals\sqrt{-1}$ in $\CC$, the image of
\[
\big(x_1-ix_2,\ x_3-ix_4,\ \dots,\ x_{2c+1}-ix_{2c+2}\big)
\]
in $S/I_A$ is a height one prime ideal that is not principal.

(4) Quite generally, one has $\CC[e_1,\dots,e_n]=\CC[p_1,\dots,p_n]$ where $e_i$ is the $i$-th symmetric polynomial. Taking $n=2c-1$, it follows that
\[
p_{2c}\ \in\ \CC[p_1,\dots,p_{2c-1}]\equalscolon R.
\]
Degree considerations then imply that $p_{2c}=g_1p_1+\cdots+g_{c-1}p_{c-1}+g_cp_c^2$, where the $g_i$ are homogeneous elements of $R$. It follows that
\[
p_{2c}\ \in\ (p_1,\dots,p_{c-1},p_c^2)S
\]
where, recall, $S=\CC[x_1,\dots,x_n]$. Since $p_1,\dots,p_{c-1},p_{2c}$ is a regular sequence in the ring~$S$ by Theorem~\ref{theorem:n:large}\,\eqref{theorem:n:large:ci}, one has $p_{2c}\notin(p_1,\dots,p_{c-1})S$. Thus $g_c$, the coefficient of $p_c^2$ in the equation above, must be nonzero, hence a unit. It follows that
\[
p_c^2\ \in\ (p_1,\dots,p_{c-1},p_{2c})S.
\]
If $p_c\in(p_1,\dots,p_{c-1},p_{2c})S$, then degree considerations would force $p_c\in(p_1,\dots,p_{c-1})S$, which is not possible since $p_1,\dots,p_c$ is a regular sequence in $S$ by Theorem~\ref{theorem:n:large}\,\eqref{theorem:n:large:ci}. Hence,
taking $A\colonequals\{1,\dots,c-1,2c\}$ one has $p_c^2\in I_A$ and $p_c\notin I_A$, so the ideal $I_A$ is not radical.
\end{example}

\begin{proof}
The proofs of \eqref{theorem:n:large:ci} and \eqref{theorem:n:large:normal} are intertwined, using induction on $c$. Suppose $c=1$, then~\eqref{theorem:n:large:ci} is immediate, while \eqref{theorem:n:large:normal} follows using the Jacobian criterion for the hypersurface~$S/I_A$, bearing in mind that $n\ge 3$.

Next suppose $c>1$ and that $n\ge 2c-1$. By the inductive hypothesis, $S/(p_{a_1},\dots, p_{a_{c-1}})$ is a normal domain using \eqref{theorem:n:large:normal}, so \eqref{theorem:n:large:ci} follows. Suppose $n\ge 2c+1$ and that the elements of~$A$ are ordered as $a_1<\dots <a_c$. By induction we know that $p_A$ is a regular sequence; we determine the singular locus of $S/I_A$ using the Jacobian criterion:

Up to scalar multiples of the rows, the Jacobian matrix takes the form
\[
J\colonequals\begin{pmatrix}
x_1^{a_1-1} & x_2^{a_1-1} & \dots & x_n^{a_1-1}\\[0.4em]
x_1^{a_2-1} & x_2^{a_2-1} & \dots & x_n^{a_2-1}\\[0.1em]
\vdots & \vdots & & \vdots \\[0.1em]
x_1^{a_c-1} & x_2^{a_c-1} & \dots & x_n^{a_c-1}
\end{pmatrix}.
\]
Consider the size $c$ minors of the Jacobian matrix $J$ with respect to the lexicographic order induced by $x_n>x_{n-1}>\dots>x_1$, e.g., the minor determined by the first $c$ columns~is
\[
\det\begin{pmatrix}
x_1^{a_1-1} & x_2^{a_1-1} & \dots & x_c^{a_1-1}\\[0.4em]
x_1^{a_2-1} & x_2^{a_2-1} & \dots & x_c^{a_2-1}\\[0.1em]
\vdots & \vdots & & \vdots \\[0.1em]
x_1^{a_c-1} & x_2^{a_c-1} & \dots & x_c^{a_c-1}
\end{pmatrix}
\ =\ x_1^{a_1-1}x_2^{a_2-1}\cdots x_c^{a_c-1}+\text{ lower order terms}.
\]
Let $I_c(J)$ denote the ideal generated by the size $c$ minors of $J$, and let $H$ denote its initial ideal. Then $x_1^{a_1-1}x_2^{a_2-1}\cdots x_c^{a_c-1}\in H$, and similarly
\[
x_{i_1}^{a_1-1}x_{i_2}^{a_2-1}\cdots x_{i_c}^{a_c-1}\ \in\ H\qquad\text{ for all }1\le i_1<i_2<\dots<i_c\le n.
\]
Assume for the moment that $a_1\ge 2$, in which case each exponent $a_i-1$ above is positive. Then $\rad H$ contains each squarefree monomial of degree $c$ in the variables $x_1,\dots,x_n$, so $\height H\ge n-c+1$. On the other hand, if $a_1=1$, then $\rad H$ contains each squarefree monomial of degree $c-1$ in the $n-1$ variables $x_2,\dots,x_n$, so once again
\[
\height H\ \ge\ (n-1)-(c-1)+1\ =\ n-c+1.
\]
In either case the ideal $H$, and hence $I_c(J)$, has height at least $n-c+1$ in the polynomial ring $S$. It follows that in the ring $S/I_A$, the defining ideal of the singular locus has height at least $n-2c+1$. Under our assumption that $n\ge 2c+1$, the ring $S/I_A$ therefore satisfies the Serre condition $(R_v)$ with~$v=n-2c$, and is hence normal, completing the proof of \eqref{theorem:n:large:normal}.

In \eqref{theorem:n:large:ufd} one has $n\ge 2c+3$. If $c=0$ there is little to be said, so assume $c\ge 1$. Then~$S/I_A$ is a complete intersection ring of dimension at least $4$, satisfying the Serre condition~$(R_3)$ by the previous paragraph, and is hence a UFD by \cite[Corollaire~XI.3.14]{SGA2}.

For \eqref{theorem:n:large:radical}, note that $n\ge 2c$ implies that $S/I_A$ is a complete intersection, so our computation of the singular locus still applies, and shows that $S/I_A$ satisfies the Serre condition $(R_0)$.
\end{proof}

\begin{remark}
Suppose $n\ge 2c-1$, so that $I_A$ is a complete intersection of codimension $c$. Then, in the proof above, we saw that the ideal $I_c(J)$ has height at least $n-c+1$. As this is the upper bound for the height of the ideal of size $c$ minors of a $c\times n$ matrix, it follows that $\height I_c(J)=n-c+1$.

In a different direction, maximal minors of \emph{generalized Vandermonde matrices}
\[
\begin{pmatrix}
x_1^{b_1} & x_2^{b_1} & \dots & x_n^{b_1} \\[0.5em]
x_1^{b_2} & x_2^{b_2} & \dots & x_n^{b_2} \\[0.2em]
\vdots & \vdots & & \vdots \\[0.2em]
x_1^{b_c} & x_2^{b_c} & \dots & x_n^{b_c}
\end{pmatrix},
\]
where $c\ge n$, are studied in~\cite{FS}. Up to monomial and Vandermonde factors, these are the \emph{Schur polynomials}, see for example~\cite[page~76]{FS}.
\end{remark}

While Theorem~\ref{theorem:n:large} addresses the case of $c$ arbitrary power sums $p_{a_1},\dots, p_{a_c}$, we next record a result for consecutive power sums:

\begin{theorem}
\label{theorem:consecutive}
Set $S\colonequals\CC[x_1,\dots,x_n]$ be a polynomial ring, and let $a$ and $c$ be positive integers. Then the ring $S/(p_a,p_{a+1},\dots,p_{a+c-1})$ has an isolated singular point.
\end{theorem}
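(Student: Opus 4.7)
The plan is to apply the Jacobian criterion and exploit the Vandermonde structure latent in the Jacobian of consecutive power sums. First I would handle the case $c\le n$: by Remark~\ref{remark:intro}(4), $p_a,p_{a+1},\dots,p_{a+c-1}$ is a regular sequence in $S$ (any $c$ of $n$ consecutive power sums in $n$ variables inherit regularity from the full length-$n$ regular sequence), so $S/I$ with $I\colonequals(p_a,\dots,p_{a+c-1})$ is a complete intersection of codimension $c$. The Jacobian matrix $J$ has entry $(i,j)$ equal to $(a+i-1)x_j^{a+i-2}$; factoring out the scalar $a+i-1$ from row $i$ and the monomial $x_j^{a-1}$ from column $j$ reduces the submatrix in columns $j_1<\dots<j_c$ to a $c\times c$ classical Vandermonde, whence the corresponding minor equals, up to a nonzero scalar,
\[
(x_{j_1}\cdots x_{j_c})^{a-1}\prod_{1\le s<t\le c}(x_{j_t}-x_{j_s}).
\]

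Next I would derive a combinatorial constraint on any singular point $y\in\CC^n$: the coordinates of $y$ take at most $c-1$ distinct nonzero values. Indeed, any $c$ indices on which $y$ assumes pairwise distinct nonzero values would produce a minor that does not vanish at $y$. List the distinct nonzero values of $y$ as $\lambda_1,\dots,\lambda_k$ with multiplicities $m_1,\dots,m_k\ge 1$ (so $k\le c-1$); the equations $p_{a+j}(y)=0$ for $j=0,1,\dots,c-1$ become
\[
\sum_{l=1}^{k}(m_l\lambda_l^a)\,\lambda_l^{j}\ =\ 0\qquad\text{for }j=0,1,\dots,c-1.
\]
Since the $\lambda_l$ are distinct, any $k$ of these $c$ equations form a nonsingular Vandermonde system in the unknowns $m_l\lambda_l^a$, forcing $m_l\lambda_l^a=0$ and hence $m_l=0$, contradicting $m_l\ge 1$. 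Therefore $k=0$ and $y=0$, so the singular locus is contained in $\{0\}$.

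The complementary range $c>n$ reduces to the above: by Remark~\ref{remark:intro}(4), the length-$n$ regular sequence $p_a,\dots,p_{a+n-1}$ already cuts out the origin set-theoretically in $\CC^n$, and since $(p_a,\dots,p_{a+n-1})\subseteq I$, we obtain $V(I)=\{0\}$, so the singular locus is vacuously isolated. I expect the crux of the argument to be the double appearance of the Vandermonde determinant---once in the factorization of the $c\times c$ Jacobian minors and once in the linear system obtained from the power-sum equations on a putative singular point---with everything else being routine linear algebra.
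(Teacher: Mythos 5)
Your argument is essentially the paper's own: reduce via the complete-intersection property, factor the $c\times c$ Jacobian minors as a monomial times a classical Vandermonde to bound the number of distinct coordinate values at a singular point, and then kill those values with a Vandermonde linear system arising from the vanishing of the consecutive power sums. One small slip: the claim that \emph{any} $k$ of the $c$ equations $\sum_l (m_l\lambda_l^a)\lambda_l^j=0$ form a nonsingular system is false over $\CC$ --- a generalized Vandermonde matrix with arbitrary exponent gaps (e.g.\ exponents $0,2$ at $\lambda=\pm1$) can be singular. The fix is immediate and is what the paper does: take the consecutive exponents $j=0,1,\dots,k-1$, giving a genuine Vandermonde matrix in the distinct $\lambda_l$, which is nonsingular; with that adjustment your proof matches the paper's.
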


\begin{proof}
Set $R\colonequals S/(p_a,p_{a+1},\dots,p_{a+c-1})$. If $c\ge n$, then $R$ is an artinian local ring by \cite[Proposition~2.9]{CKW}, so the assertion is immediate. Assume $c<n$, in which case $R$ is a complete intersection ring by the same proposition; we examine the singular locus.

Up to scalar multiples of the rows, the Jacobian matrix takes the form
\[
J\colonequals\begin{pmatrix}
x_1^{a-1} & x_2^{a-1} & \dots & x_n^{a-1}\\[0.4em]
x_1^a & x_2^a & \dots & x_n^a\\[0.1em]
\vdots & \vdots & & \vdots \\[0.1em]
x_1^{a+c-2} & x_2^{a+c-2} & \dots & x_n^{a+c-2}
\end{pmatrix}.
\]
Using $I_c(J)$ for the ideal of minors as earlier, consider the ideal
\[
\fraka\colonequals I_c(J)+(p_a,p_{a+1},\dots,p_{a+c-1})S
\]
of $S$. It suffices to verify that the algebraic set $V(\fraka)$ contains no nonzero point of $\CC^n$. Suppose $\bsz\colonequals(z_1,\dots,z_n)\in V(\fraka)$. If $\bsz$ has at least $c$ distinct nonzero entries, without loss of generality $z_1,\dots,z_c$, evaluating the minor determined by the first $c$ columns of $J$ at $\bsz$ gives
\[
\det\begin{pmatrix}
z_1^{a-1} & z_2^{a-1} & \dots & z_c^{a-1}\\[0.4em]
z_1^a & z_2^a & \dots & z_c^a\\[0.1em]
\vdots & \vdots & & \vdots \\[0.1em]
z_1^{a+c-2} & z_2^{a+c-2} & \dots & z_c^{a+c-2}
\end{pmatrix}
\ =\ (z_1\cdots z_c)^{a-1}
\det\begin{pmatrix}
1 & 1 & \dots & 1\\[0.4em]
z_1 & z_2 & \dots & z_c\\[0.1em]
\vdots & \vdots & & \vdots \\[0.1em]
z_1^{c-1} & z_2^{c-1} & \dots & z_c^{c-1}
\end{pmatrix}
\]
which must be nonzero, a contradiction. It follows that the number $k$ of distinct entries of~$\bsz$ is at most $c$, allowing now for zero entries. Suppose $z_1,\dots,z_k$ are the distinct entries, and occur with multiplicity $m_1,\dots,m_k$ respectively in the $n$-tuple $\bsz$. The fact that the power sums $p_a,p_{a+1},\dots,p_{a+k-1}$ vanish at $\bsz$ gives us the matrix equation
\[
\begin{pmatrix}
1 & 1 & \dots & 1\\[0.4em]
z_1 & z_2 & \dots & z_k\\[0.1em]
\vdots & \vdots & & \vdots \\[0.1em]
z_1^{k-1} & z_2^{k-1} & \dots & z_k^{k-1}
\end{pmatrix}
\begin{pmatrix}
m_1z_1^a\\[0.4em]
m_2z_2^a\\
\vdots \\[0.1em]
m_kz_k^a
\end{pmatrix}
\ =\
\begin{pmatrix}
0\\[0.6em]
0\\
\vdots \\[0.1em]
0
\end{pmatrix}.
\]
This implies that the determinant of the Vandermonde matrix to the left must be zero, a contradiction. It follows that the only point in $V(\fraka)$ is $(0,\dots,0)$.
\end{proof}

\section{Power sums in four variables}

While each part of Theorem~\ref{theorem:n:large} is optimal in view of Example~\ref{example:n:large}, the boundary cases can be subtle and interesting; for example, when $n=4$ and $A=\{a,b\}$, the ideal $I_A$ is radical by Theorem~\ref{theorem:n:large}\,\eqref{theorem:n:large:radical}, but it appears difficult to determine when $I_A$ is prime, see Remark~\ref{remark:4:domain}. First, however, we record precisely when the ring $\CC[x_1,x_2,x_3,x_4]/(p_a, p_b)$ is a normal domain.

For $p$ a prime integer, let $\nu_p$ denote the $p$-adic valuation on $\ZZ\smallsetminus\{0\}$, i.e., $\nu_p(n)$ is the largest integer $e$ such that $p^e$ divides $n$.

\begin{theorem}
\label{theorem:4:normality}
Let $S\colonequals\CC[x_1,\dots,x_4]$. For positive integers $a<b$, set
\[
p_a\colonequals x_1^a+\dots+x_4^a
\qquad\text{and}\qquad
p_b\colonequals x_1^b+\dots+x_4^b.
\]
If $a=1$, then $S/(p_a,p_b)$ is a normal domain if and only if $b$ is even, whereas if $1<a<b$, then $S/(p_a,p_b)$ is a normal domain if and only if
\begin{enumerate}[\quad \rm(1)]
\item $\nu_2(a)\neq \nu_2(b)$, and
\item either $\nu_3(a)\neq \nu_3(b)$, or $\nu_3(a)=\nu_3(b)=\nu_3(a-b)$.
\end{enumerate}
\end{theorem}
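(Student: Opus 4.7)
The plan is to apply Serre's criterion for normality to $R\colonequals S/(p_a,p_b)$. Since $n=4\ge 2c-1=3$, Theorem~\ref{theorem:n:large}\,\eqref{theorem:n:large:ci} gives that $(p_a,p_b)$ is a height-two complete intersection, so $R$ is a two-dimensional Cohen--Macaulay ring; Theorem~\ref{theorem:n:large}\,\eqref{theorem:n:large:radical} then says $R$ is reduced. Cohen--Macaulayness yields $(S_2)$ automatically, so $R$ is a normal ring precisely when it satisfies $(R_1)$, i.e., when the singular locus of $\operatorname{Spec} R$ consists only of the irrelevant point. To upgrade ``normal'' to ``normal domain,'' I would invoke Hartshorne's connectedness theorem: $\operatorname{Proj} R$ is a one-dimensional complete intersection in $\PP^3$, hence connected, and a connected normal scheme is integral, forcing $R$ to be a domain. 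Thus the problem reduces to deciding when the singular locus of $R$ has no nonzero point of $\CC^4$.

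The singular locus is cut out by the $2\times 2$ minors of the Jacobian
\[
J=\begin{pmatrix} x_1^{a-1}&x_2^{a-1}&x_3^{a-1}&x_4^{a-1}\\[0.2em] x_1^{b-1}&x_2^{b-1}&x_3^{b-1}&x_4^{b-1}\end{pmatrix},
\]
whose $(i,j)$-minor factors as $(x_ix_j)^{a-1}(x_j^{b-a}-x_i^{b-a})$. I would analyze a putative nonzero singular point $\bsz\in\CC^4$ by the number $k$ of its nonzero entries. When $a=1$ the prefactor is trivial, so if $z_i=0$ then each minor $x_j^{b-1}-x_i^{b-1}$ forces $z_j=0$; only the stratum $k=4$ is relevant. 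When $a\ge 2$ the factor $(x_ix_j)^{a-1}$ automatically kills every minor involving a zero coordinate, so for $k\in\{3,4\}$ the nonzero entries must satisfy $z_i=r\zeta_i$ with $\zeta_i^{b-a}=1$; using $\zeta_i^b=\zeta_i^a$, both $p_a(\bsz)$ and $p_b(\bsz)$ collapse to the single equation $\zeta_1^a+\dots+\zeta_k^a=0$. Since the $a$th-power map on $\mu_{b-a}$ is surjective onto $\mu_{d'}$ where $d'\colonequals(b-a)/\gcd(a,b)$, this becomes a question about $k$-term vanishing sums in $\mu_{d'}$. The stratum $k=2$ is treated separately: it demands $\eta\in\CC^\times$ with $\eta^{b-a}=1$ and $\eta^a=-1$.

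The key analytic input is the classical fact (a special case of Lam--Leung) that $m$ elements of $\mu_N$ can sum to zero iff $m$ is a nonnegative integer combination of primes dividing $N$; in particular $m=3$ forces $3\mid N$, and $m=4$ forces $2\mid N$. A short computation with $p$-adic valuations yields $2\mid d' \iff \nu_2(a)=\nu_2(b)$ and $3\mid d' \iff \nu_3(a)=\nu_3(b)<\nu_3(a-b)$; similarly, the $k=2$ question has a solution iff $\nu_2(b-a)>\nu_2(a)$, which is again equivalent to $\nu_2(a)=\nu_2(b)$, and so is subsumed by the $k=4$ case. Putting everything together, the singular locus reduces to the origin precisely when $\nu_2(a)\neq\nu_2(b)$ and when the bad configuration $\nu_3(a)=\nu_3(b)<\nu_3(a-b)$ is avoided, matching conditions~(1) and~(2); in the $a=1$ case only the $k=4$ analysis survives and the criterion collapses to $2\mid b-1$, i.e., $b$ even. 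The main obstacle is simply the $p$-adic bookkeeping across the four strata; the analytic heart of the argument is the Lam--Leung classification of short vanishing sums of roots of unity.
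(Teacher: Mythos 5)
Your proof is correct and follows the same overall plan as the paper: reduce to the Serre condition $(R_1)$ via the Jacobian criterion, note that the $2\times 2$ minors factor as $(x_ix_j)^{a-1}(x_j^{b-a}-x_i^{b-a})$, and stratify the singular locus by the number of vanishing coordinates (equivalently, by the form of the minimal primes of the minor ideal). The difference is in how the resulting arithmetic condition is resolved. The paper proves a bespoke elementary lemma (Lemma~\ref{lemma:roots:of:unity}) giving explicit constructions and ad hoc arguments for when the equations $\alpha^a+1=0$, $\alpha^a+\beta^a+1=0$, and $\alpha^a+\beta^a+\gamma^a+1=0$ admit solutions with $\alpha^{b-a}=\beta^{b-a}=\gamma^{b-a}=1$; you instead normalize to a $k$-term vanishing sum in $\mu_{d'}$ with $d'=(b-a)/\gcd(a,b)$ and invoke the Lam--Leung classification of vanishing sums of $N$-th roots of unity. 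Your route is cleaner and makes it transparent why $2$ governs the strata $k\in\{2,4\}$ and $3$ governs $k=3$; the paper's route has the advantage of being self-contained. Two minor remarks: first, the passage from normality to ``normal domain'' can be done more cheaply than via Hartshorne connectedness on $\operatorname{Proj}$, since a nontrivial idempotent in the graded ring $R$ would have to live in $R_0=\CC$, so $R$ is automatically connected. Second, the phrase ``the criterion collapses to $2\mid b-1$, i.e., $b$ even'' is a slip: $2\mid(b-1)$ means $b$ is \emph{odd}, which is precisely the obstruction, so normality is equivalent to $b$ even as the theorem states, but the sentence as written inverts the parity.
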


\begin{proof}
Since $a$ and $b$ are distinct, $S/(p_a,p_b)$ is a complete intersection ring of dimension~$2$, and is normal precisely if the singular locus consists of a point. Set $\frakm$ to be the homogeneous maximal ideal of $S$.

Up to scalar multiples of the rows, the Jacobian matrix is
\[
\begin{pmatrix}
x_1^{a-1} & x_2^{a-1} & x_3^{a-1} & x_4^{a-1}\\[0.3em]
x_1^{b-1} & x_2^{b-1} & x_3^{b-1} & x_4^{b-1}
\end{pmatrix},
\]
with the ideal generated by its size two minors being
\[
\fraka\colonequals\big((x_ix_j)^{a-1}(x_j^{b-a}-x_i^{b-a})\ :\ 1\le i<j\le 4\big).
\]

Consider first the case where $a=1$. Then a minimal prime of $\fraka$ has the form
\[
\frakb\colonequals(x_1-\alpha x_4,\ x_2-\beta x_4,\ x_3-\gamma x_4),
\]
where $\alpha$, $\beta$, $\gamma$ are complex numbers with $\alpha^{b-1}=\beta^{b-1}=\gamma^{b-1}=1$. Since
\[
p_a\ \equiv\ (\alpha+\beta+\gamma+1)x_4\bmod\frakb,
\]
and
\[
p_b\ \equiv\ (\alpha^b+\beta^b+\gamma^b+1)x_4^b\ \equiv\ (\alpha+\beta+\gamma+1)x_4^b\bmod\frakb,
\]
it follows that $\frakm$ is the unique minimal prime of $\fraka+(p_a,p_b)$ unless there exist $\alpha$, $\beta$, $\gamma$ in~$\CC$ with $\alpha^{b-1}=\beta^{b-1}=\gamma^{b-1}=1$ and $\alpha+\beta+\gamma+1=0$. If $b$ is even, no such $(\alpha,\beta,\gamma)$ exists by Lemma~\ref{lemma:roots:of:unity}\,\eqref{lemma:roots:of:unity:3}, whereas if $b$ is odd, one may take $(\alpha,\beta,\gamma)$ to be $(-1,1,-1)$.

Next, suppose $a\ge 2$. Then, up to radical, the ideal $\fraka$ contains $x_ix_j(x_j^{b-a}-x_i^{b-a})$ for each~$1\le i<j\le 4$. It follows that, up to permuting indices, a minimal prime of $\fraka$ in $S$ has one of the following forms
\begin{enumerate}[\quad \rm(a)]
\item $(x_1,\ x_2,\ x_3)$,
\item $(x_1,\ x_2,\ x_3-\alpha x_4)$,
\item $(x_1,\ x_2-\alpha x_4,\ x_3-\beta x_4)$, or
\item $(x_1-\alpha x_4,\ x_2-\beta x_4,\ x_3-\gamma x_4)$,
\end{enumerate}
where $\alpha^{b-a}=\beta^{b-a}=\gamma^{b-a}=1$. We examine these in turn:

\medskip

Case (a). The only minimal prime of $(x_1,x_2,x_3)+(p_a,p_b)$ is $\frakm$.

\medskip

Case (b). The ideal $(x_1,x_2,x_3-\alpha x_4)+(p_a,p_b)$ has radical
\[
\big(x_1,\ x_2,\ x_3-\alpha x_4,\ (\alpha^a+1)x_4,\ (\alpha^b+1)x_4\big)\ =\
\big(x_1,\ x_2,\ x_3-\alpha x_4,\ (\alpha^a+1)x_4\big),
\]
where the equality above holds since $\alpha^{b-a}=1$. There exists such an ideal other than $\frakm$ precisely if $\nu_2(a)=\nu_2(b)$, see Lemma~\ref{lemma:roots:of:unity}\,\eqref{lemma:roots:of:unity:1}.

\medskip

Case (c). The ideal $(x_1,x_2-\alpha x_4,x_3-\beta x_4)+(p_a,p_b)$ has radical
\[
\big(x_1,\ x_2-\alpha x_4,\ x_3-\beta x_4,\ (\alpha^a+\beta^a+1)x_4\big).
\]
Use Lemma~\ref{lemma:roots:of:unity}\,\eqref{lemma:roots:of:unity:2}.

\medskip

Case (d). Lastly, the ideal $(x_1-\alpha x_4,x_2-\beta x_4,x_3-\gamma x_4)+(p_a,p_b)$ has radical
\[
\big(x_1-\alpha x_4,\ x_2-\beta x_4,\ x_3-\gamma x_4,\ (\alpha^a+\beta^a+\gamma^a+1)x_4\big),
\]
in which case we use Lemma~\ref{lemma:roots:of:unity}\,\eqref{lemma:roots:of:unity:3}.
\end{proof}

\begin{lemma}
\label{lemma:roots:of:unity}
Let $a$ and $b$ be distinct positive integers.
\begin{enumerate}[\quad \rm(1)]
\item\label{lemma:roots:of:unity:1} There exists $\alpha$ in $\CC$ with $\alpha^{b-a}=1$ and $\alpha^a+1=0$ if and only if $\nu_2(a)=\nu_2(b)$.

\item\label{lemma:roots:of:unity:2} There exists $\alpha$ and $\beta$ in $\CC$ with $\alpha^{b-a}=1=\beta^{b-a}$ and $\alpha^a+\beta^a+1=0$ if and only if~$\nu_3(a)=\nu_3(b)<\nu_3(b-a)$.

\item\label{lemma:roots:of:unity:3} There exists $\alpha$, $\beta$, and $\gamma$ in $\CC$ with $\alpha^{b-a}=\beta^{b-a}=\gamma^{b-a}=1$ and $\alpha^a+\beta^a+\gamma^a+1=0$ if and only if $\nu_2(a)=\nu_2(b)$.
\end{enumerate}
\end{lemma}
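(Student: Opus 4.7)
The plan is to reduce all three parts to a single elementary question: for a prescribed root of unity $\zeta$, when does there exist $\alpha\in\CC$ satisfying $\alpha^{b-a}=1$ and $\alpha^a=\zeta$? What remains in parts \eqref{lemma:roots:of:unity:2} and \eqref{lemma:roots:of:unity:3} is a brief classification of the short vanishing sums of roots of unity that arise.

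For part \eqref{lemma:roots:of:unity:1}, take $\zeta=-1$. The constraints $\alpha^{b-a}=1$ and $\alpha^{2a}=1$ force $\alpha$ to be a $g$-th root of unity for $g\colonequals\gcd(2a,b-a)$. Writing $\alpha=\exp(2\pi ik/g)$, the condition $\alpha^a=-1$ becomes the requirement that $2ka/g$ be an odd integer; this is always an integer since $g\mid 2a$, and it is odd for some $k\in\{0,\dots,g-1\}$ precisely when $\nu_2(g)>\nu_2(a)$. Since $\nu_2(g)=\min(\nu_2(a)+1,\,\nu_2(b-a))$, this reduces to $\nu_2(b-a)\ge\nu_2(a)+1$, which is equivalent to $\nu_2(a)=\nu_2(b)$ by the parity observation that $a/2^v$ and $b/2^v$ both being odd forces their difference to be even.

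For part \eqref{lemma:roots:of:unity:2}, the starting point is that two unit complex numbers summing to $-1$ must be complex conjugates with real part $-1/2$, so $\{\alpha^a,\beta^a\}=\{\omega,\omega^2\}$ for $\omega$ a primitive cube root of unity. Thus it suffices to find $\alpha$ with $\alpha^{b-a}=1$ and $\alpha^a=\omega$, since $\beta\colonequals\alpha^{-1}$ then automatically satisfies $\beta^a=\omega^2$. Repeating the computation of part \eqref{lemma:roots:of:unity:1} with the prime $3$ in place of $2$, and with $g\colonequals\gcd(3a,b-a)$, the existence condition becomes $\nu_3(g)>\nu_3(a)$, i.e., $\nu_3(b-a)>\nu_3(a)$. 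Since $\nu_3(a)\ne\nu_3(b)$ would give $\nu_3(b-a)=\min(\nu_3(a),\nu_3(b))\le\nu_3(a)$, the condition is exactly $\nu_3(a)=\nu_3(b)<\nu_3(b-a)$. Note that, unlike the $p=2$ case, the strict inequality $\nu_3(a)=\nu_3(b)<\nu_3(b-a)$ is not automatic from $\nu_3(a)=\nu_3(b)$, so it appears explicitly in the statement.

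Part \eqref{lemma:roots:of:unity:3} is where a genuine sum classification enters. The backward direction is immediate from part \eqref{lemma:roots:of:unity:1}: assuming $\nu_2(a)=\nu_2(b)$, set $\beta\colonequals 1$ and choose $\alpha,\gamma$ from part \eqref{lemma:roots:of:unity:1} so that $\alpha^a=\gamma^a=-1$. For the forward direction I would invoke the classical fact that any vanishing sum of four roots of unity decomposes into two antipodal pairs; applied to $1+\alpha^a+\beta^a+\gamma^a=0$, the antipode of $1$ must appear among $\alpha^a,\beta^a,\gamma^a$, so one of these equals $-1$, and part \eqref{lemma:roots:of:unity:1} then delivers $\nu_2(a)=\nu_2(b)$. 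The main obstacle is this four-term pairing fact; if a self-contained argument is preferred, I would argue in-line that the absence of any two-term vanishing subsum would turn the equation into a primitive vanishing sum of length $4$, contradicting the Conway--Jones theorem that primitive vanishing sums of roots of unity have prime length.
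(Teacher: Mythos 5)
Your proposal is correct. Part~\eqref{lemma:roots:of:unity:3} is handled essentially as in the paper: both arguments reduce to the fact that a vanishing sum of four roots of unity splits into two antipodal pairs (the paper phrases this via the rhombus geometry, you via Conway--Jones; either way one is led to an appearance of $-1$ and a reduction to part~\eqref{lemma:roots:of:unity:1}). For parts~\eqref{lemma:roots:of:unity:1} and~\eqref{lemma:roots:of:unity:2} your route is genuinely different in flavor: you uniformly reduce to the question ``when does there exist $\alpha$ with $\alpha^{b-a}=1$ and $\alpha^a=\zeta$'' and resolve it by a $\gcd$/order computation, obtaining the existence criterion $\nu_p(b-a)>\nu_p(a)$ directly for $p=2,3$. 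The paper instead exhibits an explicit witness (e.g.\ $\alpha$ with $\alpha^{2^e}=-1$) for the forward direction and proves the converse by a short manipulation of valuations of $\alpha^{cd}$. Your approach buys a single computation that handles both primes and both directions at once, and makes transparent exactly why the extra condition $\nu_3(a)=\nu_3(b)<\nu_3(b-a)$ appears in~\eqref{lemma:roots:of:unity:2} but not in~\eqref{lemma:roots:of:unity:1}; the paper's approach is more hands-on but requires treating the forward and converse implications separately. One small presentational point: in part~\eqref{lemma:roots:of:unity:1} you justify the final equivalence $\nu_2(b-a)\ge\nu_2(a)+1\iff\nu_2(a)=\nu_2(b)$ by noting only that equal $2$-adic valuations force the difference to have larger valuation; for completeness one should also record that unequal valuations force $\nu_2(b-a)=\min(\nu_2(a),\nu_2(b))\le\nu_2(a)$, but this is standard and clearly within reach of your argument.
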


\begin{proof}
The conditions are symmetric with respect to $a$ and $b$, e.g., $\alpha^{b-a}=1$ gives $\alpha^b=\alpha^a$.

\eqref{lemma:roots:of:unity:1} If $e\colonequals\nu_2(a)=\nu_2(b)$, choose $\alpha$ with $\alpha^{2^e}=-1$, in which case $\alpha^a=-1=\alpha^b$. For the converse, let $a=2^ec$ and $b=2^fd$, where $c$ and $d$ are odd. If $\alpha^a=-1=\alpha^b$, then
\[
(\alpha^{cd})^{2^e}\ =\ -1\ =\ (\alpha^{cd})^{2^f},
\]
so $e=f$.

\eqref{lemma:roots:of:unity:2} Let $\omega$ be a primitive cube root of unity. If $e\colonequals\nu_3(a)=\nu_3(b)<\nu_3(b-a)$, choose~$\alpha$ with $\alpha^{3^e}=\omega$. Then $\alpha^{3^{e+1}}=1$, so $\alpha^{b-a}=1$. Setting $\beta\colonequals\alpha^2$, one has $\beta^{b-a}=1$ as well. Moreover, $\{\alpha^a, \beta^a\}=\{\omega, \omega^2\}$, so that
\[
\alpha^a+\beta^a+1\ =\ 0.
\]

For the converse, if $\alpha^a$ and $\beta^a$ are roots of unity with $\alpha^a+\beta^a+1=0$, then~$\alpha^a$ and~$\beta^a$ must be complex conjugates with real part $-1/2$. It follows that~$\{\alpha^a, \beta^a\}=\{\omega, \omega^2\}$. Assume, without loss of generality, that $\alpha^a=\omega$. Let $a=3^ec$ and $b=3^fd$, where $c$ and $d$ are relatively prime to $3$. Suppose now that $\alpha^{b-a}=1$. Then
\[
(\alpha^{cd})^{3^e}\ =\ \omega^d \quad\text{and}\quad (\alpha^{cd})^{3^f}\ =\ \omega^c
\]
are primitive cube root of unity, so $e=f$. Also $\alpha^{b-a}=1$ implies that $\alpha^{3^e(d-c)}=1$, so
\[
\omega^{d-c}\ =\ \alpha^{a(d-c)}\ =\ \alpha^{3^ec(d-c)}=1,
\]
implying that $3$ divides $d-c$.

\eqref{lemma:roots:of:unity:3} If $e\colonequals\nu_2(a)=\nu_2(b)$, choose~$\alpha$ with $\alpha^{2^e}=-1$. Then $\alpha^{2^{e+1}}=1$ so $\alpha^{b-a}=1$. Setting $\beta\colonequals\alpha^2$ and $\gamma\colonequals\alpha$, one has $\beta^{b-a}=\gamma^{b-a}=1$, and also
\[
\alpha^a+\beta^a+\gamma^a+1\ =\ (-1)+1+(-1)+1\ =\ 0.
\]

The converse: suppose $4$ distinct roots of unity sum to $0$, then the corresponding vectors in the complex plane have length $1$ and form a rhombus; pairing the parallel sides, each pair has sum~$0$. It follows that one of $\alpha^a$, $\beta^a$, or $\gamma^a$ equals $-1$. If the roots of unity are repeated, then $\{\alpha^a, \beta^a, \gamma^a, 1\}=\{\pm 1\}$. Assume, without loss of generality, that $\alpha^a=-1$. Then, if $\alpha^{b-a}=1$, part~\eqref{lemma:roots:of:unity:1} of the lemma implies that $\nu_2(a)=\nu_2(b)$.
\end{proof}

\begin{remark}
\label{remark:4:domain}
Set $S\colonequals\CC[x_1,x_2,x_3,x_4]$. It does not appear easy to determine precisely when the ring $S/(p_a, p_b)$ is a domain; we record some observations in this regard:

\begin{enumerate}[\quad \rm(1)]
\item If $a<b$ are odd integers, then $(p_a, p_b)$ is not prime since $(p_a, p_b)\subsetneq(x_1+x_2, x_3+x_4)$.

\item If $(p_a, p_b)$ is not prime, then neither is $(p_{ak}, p_{bk})$ for any positive integer $k$; one has an embedding of $\CC$-algebras $S/(p_a, p_b)\into S/(p_{ak}, p_{bk})$ induced by $x_i\mapsto x_i^k$.

\item If $b=4k+2$, then $S/(p_2, p_b)$ is not normal in view of Theorem~\ref{theorem:4:normality}. Moreover,
\[
(p_2, p_b)\ \subsetneq\ (x_1-ix_2,\ x_3-ix_4)
\]
shows that $(p_2, p_b)$ is not prime in this case.

\item When $a=2$, we conjecture that $S/(p_2, p_b)$ is a domain that is not normal precisely when $b=6k+5$ or $b=12k+8$, and $k$ is an integer with $k\ge1$. The case $k=0$ of these appears below:

\item The ideal $(p_2, p_5)$ is not prime: one has $p_5\in(p_1,p_2)$, see Remark~\ref{remark:codimension}, and it follows that $(p_2, p_5)\subsetneq(p_1,p_2)$.

\item The ideal $(p_2, p_8)$ is not prime: in the ring $S/(p_2, p_8)$ one has
\[
(x_2^2x_3^2 + x_2^2x_4^2 + x_3^2x_4^2 - x_1^4)^2 - 2(x_1x_2x_3x_4)^2\ =\ 0,
\]
so the image of $x_2^2x_3^2 + x_2^2x_4^2 + x_3^2x_4^2 - x_1^4 - \sqrt{2}\cdot x_1x_2x_3x_4$ in $S/(p_2, p_8)$ is a zerodivisor; one may verify readily that this image is nonzero.

In contrast, one may verify using \cite{Magma} or \cite{Macaulay2} that $\QQ[x_1,x_2,x_3,x_4]/(p_2, p_8)$ is an integral domain.

\item When $a=3$, we conjecture that $S/(p_3, p_b)$ is a domain that is not normal precisely when $b=18k+12$ and $k\ge 0$ is an integer.

\item We arrived at our conjectures in the cases $a=2$ and $a=3$ as follows: first one verifies using \cite{Magma} or \cite{Macaulay2} that when $\CC$ is replaced by $\QQ$, the corresponding ring
\[
R\colonequals\QQ[x_1,x_2,x_3,x_4]/(p_a, p_b)
\]
is an integral domain. Then we use the computational algebra programs to determine the integral closure $R'$ of $R$. Note that $R'\otimes_\QQ\CC$ is also normal, hence a product of normal domains. If ${[R']}_0=\QQ$, then $R'\otimes_\QQ\CC$ must be a normal domain, and it follows that its subring $R\otimes_\QQ\CC=S/(p_a, p_b)$ is a domain.
\end{enumerate}
\end{remark}

\section{Power sums in three variables: a special case of the conjecture}
\label{section:a=1}

We work over the complex numbers $\CC$ throughout this section. Given positive integers $a<b<c$ with~$\gcd(a,b,c)=1$, Conjecture~\ref{conjecture:6abc} as generalized in \cite[Conjecture~12]{MSW} may be rephrased as saying that the equations
\[
1+x^a+y^a\ =\ 1+x^b+y^b\ =\ 1+x^c+y^c\ =\ 0
\]
only have trivial solutions, i.e., with either $x$ and $y$ being cube roots of unity, or one of them being $0$ and the other being $-1$. We settle the conjecture when~$a=1$. In this case $y=-1-x$, so we are interested in solutions to the pair of polynomial equations
\begin{equation}
\label{equation:a=1}
1+x^b+(-1-x)^b \ =\ 0 \ =\ 1+x^c+(-1-x)^c.
\end{equation}
Indeed, we prove:

\begin{theorem}
\label{theorem:a=1}
For integers $b$ and $c$ with $1<b<c$, the only possible common zeros of the polynomials $1+x^b+(-1-x)^b$ and $1+x^c+(-1-x)^c$ are $0$, $-1$, $\omega$, $\omega^2$, where $\omega\colonequals e^{2\pi i/3}$. The common zeros at $0$, $-1$ occur when $2\nmid bc$, while the common zeros at $\omega$, $\omega^2$ occur when $3\nmid bc$. Consequently, when $6\mid bc$, there are no common zeros to the two polynomials.
\end{theorem}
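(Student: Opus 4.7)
The ``occur when'' parts of the theorem follow by direct evaluation: $f_n(0) = f_n(-1) = 1 + (-1)^n$ vanishes precisely when $n$ is odd, while $f_n(\omega) = 1 + \omega^n + \omega^{2n}$ (using $-1 - \omega = \omega^2$) vanishes precisely when $3 \nmid n$, and similarly at $\omega^2$. Hence $0$ and $-1$ are common zeros of $f_b, f_c$ exactly when $2 \nmid bc$, while $\omega, \omega^2$ are common zeros exactly when $3 \nmid bc$; consequently $6 \mid bc$ excludes both types of common zeros.

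For the converse direction, suppose for contradiction that $x_0 \in \CC \setminus \{0, -1, \omega, \omega^2\}$ is a common zero, and set $y_0 := -1 - x_0$, so that $x_0 + y_0 = -1$ and $x_0^n + y_0^n = -1$ for $n = b, c$. Rewriting $f_n(x_0) = 0$ as $(1+x_0)^n = (-1)^{n+1}(1 + x_0^n)$ and taking absolute values yields the crucial identity
\[
|1+x_0|^n \ =\ |1 + x_0^n| \qquad (n = b, c).
\]
My plan is to handle two cases based on whether $|x_0| = |y_0|$ or not.

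If $|x_0| = |y_0|$, then after ruling out the degenerate case $x_0 = y_0 = -1/2$ by direct substitution (for $n \ge 2$), the numbers $x_0$ and $y_0$ must be complex conjugates with $\real x_0 = -1/2$. Writing $x_0 = R e^{i\theta}$ with $R\cos\theta = -1/2$ and setting $t := \cos\theta = -1/(2R)$, the conditions $2R^n \cos(n\theta) = -1$ for $n = b, c$ translate, via $T_n(\cos\theta) = \cos(n\theta)$ for the Chebyshev polynomial $T_n$, into the requirement that $t$ be a common root of the polynomials
\[
R_n(t)\ :=\ T_n(t) - (-1)^{n+1} 2^{n-1} t^n \qquad (n = b, c).
\]
These satisfy $R_2(t) = 4t^2 - 1$ and $R_3(t) = -3t$, together with the recursion $R_{n+2}(t) = (4t^2 - 1) R_n(t) - 2t R_{n-1}(t)$, inherited from the Chebyshev recurrence. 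Using this recursion one verifies that $t \mid R_n$ iff $n$ is odd, and $(4t^2 - 1) \mid R_n$ iff $3 \nmid n$; the goal is then to establish the matching upper bound $\gcd(R_b, R_c) \mid t(4t^2 - 1)$ in $\CC[t]$, which encodes exactly the trivial common zeros $\omega, \omega^2$ (the $t = 0$ factor corresponds to $R = \infty$, outside the valid range and handled via the other case). If instead $|x_0| \neq |y_0|$, say $|x_0| > |y_0|$, the bounds $\bigl||x_0|^n - |y_0|^n\bigr| \le 1 \le |x_0|^n + |y_0|^n$ for both $n = b, c$ (from $|x_0^n + y_0^n| = 1$), combined with the argument constraints $n \arg(1+x_0) \equiv \arg(1+x_0^n) + (n+1)\pi \pmod{2\pi}$, should pin $x_0$ to the trivial values $\{0, -1\}$ by a squeezing argument forcing $|x_0|$ very close to $1$ and $|y_0|$ very close to $0$.

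The main obstacle I anticipate is establishing $\gcd(R_b, R_c) \mid t(4t^2 - 1)$ in $\CC[t]$ for all $1 < b < c$. My plan is to run a Euclidean-algorithm argument on the Chebyshev-type recursion for $R_n$, exploiting a $\gcd$-identity of the form ``$\gcd(R_b, R_c)$ is controlled by $\gcd(b, c)$'' (in analogy with $\gcd(F_m, F_n) = F_{\gcd(m,n)}$ for the Fibonacci numbers), together with tracking which residues of $n$ modulo $2$ and modulo $3$ yield the factors $t$ and $4t^2 - 1$ respectively—precisely the residues that appear in the theorem's statement of trivial common zeros.
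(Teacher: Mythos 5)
The reformulation via Chebyshev-type polynomials $R_n(t) = T_n(t) - (-1)^{n+1}2^{n-1}t^n$ is a valid change of variables (on the line $\real z = -1/2$, write $z = -e^{i\theta}/(2\cos\theta)$ and take $t = \cos\theta$), and the ``occur when'' direction is fine. But the proposal has a genuine gap at its core, and the approach is fundamentally different from, and much weaker than, the paper's.

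The crux is your claimed inequality $\gcd(R_b,R_c)\mid t(4t^2-1)$ in $\CC[t]$. Tracing the change of variables shows this is not a \emph{reduction} of the problem but a \emph{restatement} of it: the roots $t$ of $R_n$ with $t\notin\{0,\pm 1/2\}$ correspond bijectively (via the substitution and the six-fold symmetry of the zero set of $P_n$ under $z\mapsto -1-z$ and $z\mapsto 1/z$) to the nontrivial zeros of $P_n$, so proving the gcd claim \emph{is} proving the theorem. Your plan to establish it by a Euclidean algorithm using a Fibonacci-type identity is not carried out, and there are strong reasons it cannot be: (i) the recursion you wrote, $R_{n+2}=(4t^2-1)R_n-2tR_{n-1}$, is a third-order recurrence with a gap, not the order-two ``divisibility sequence'' recurrence that makes $\gcd(F_m,F_n)=F_{\gcd(m,n)}$ work; (ii) even the base case $b=6$ ($R_6$ has no factor of $t$ or $4t^2-1$, so you would need $\gcd(R_6,R_c)=1$ for all $c>6$) required its own separate proof in \cite[Theorem~2.11]{CKW}; (iii) the paper's proof uses a deep lower bound for linear forms in logarithms of algebraic numbers due to Laurent--Mignotte--Nesterenko (Lemma~\ref{lemma:LMN}), a Baker-type transcendence result, together with arithmetic constraints coming from $2f(x)\in\ZZ[x]$ and an explicit lower bound $r>14/9$ on root moduli. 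If a purely algebraic gcd identity on a three-term recurrence sufficed, none of this machinery would be needed, and the $a=1$ case of \cite[Conjecture~2.10]{CKW} would not have been open.

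Separately, the case ``$|x_0|\neq|y_0|$'' is handled by a ``squeezing argument'' that is described but not given; as written it is not a proof. In fact this case is unnecessary if you invoke the six-fold symmetry of the common zero set ($z\mapsto -1-z$ and $z\mapsto 1/z$ preserve the zeros of every $P_n$, so any orbit of nontrivial common zeros contains a representative on $\real z = -1/2$), which is also the route of Lemma~\ref{lemma:trivial:factors}. You should replace the squeezing argument with this symmetry reduction, and then face the real problem, which is that the remaining gcd claim still needs the transcendence input the paper supplies.
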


Closely related problems were considered previously in \cite{Beukers,Nanninga}. In particular Beukers~\cite[Theorem 4.1]{Beukers} established the following result:

\begin{theorem}
If $\theta\in\CC$ differs from $0$, $-1$, $\omega$, $\omega^2$, where $\omega\colonequals e^{2\pi i/3}$, then there is at most one integer $n>1$ such that $1+\theta^n-(1+\theta)^n=0$.
\end{theorem}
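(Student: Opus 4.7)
The plan is to reduce the pair of equations via a Möbius substitution, and then extract a root-of-unity constraint that pins the transformed variable to the primitive sixth roots of unity (corresponding precisely to the exceptional values of $\theta$).

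Assume for contradiction that $\theta \in \CC\setminus\{0,-1,\omega,\omega^2\}$ and $1<m<n$ satisfy $1+\theta^k-(1+\theta)^k=0$ for $k\in\{m,n\}$. Set $u=1+\theta$ and $v=\theta$; both are nonzero by hypothesis, so the relations $u-v=1$ and $u^k-v^k=1$ for $k\in\{m,n\}$ allow us to introduce $z\colonequals u/v$ (so $v=1/(z-1)$ and $z\neq 0,1$) and rewrite the two hypotheses as
\[
(z-1)^n = z^n-1, \qquad (z-1)^m = z^m-1.
\]
Under the inverse map $\theta=1/(z-1)$, the exceptional pair $\{\omega,\omega^2\}$ corresponds to $\{-\omega^2,-\omega\}$, precisely the two points in $\CC$ satisfying $|z|=|z-1|=1$. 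The target is therefore to show that any common solution of the displayed equations with $1<m<n$ satisfies $|z|=|z-1|=1$.

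Dividing the two displayed equations yields $(z-1)^{n-m}=(z^n-1)/(z^m-1)$. Using the identity $\gcd(z^n-1,z^m-1)=z^{\gcd(m,n)}-1$ in $\ZZ[z]$ and comparing cyclotomic factorizations on the two sides, one aims to force $z^{\gcd(m,n)}=1$; together with the original relation this gives $(z-1)^{\gcd(m,n)}=0$, which combined with $z\neq 1$ forces $z$ itself (and hence its Galois conjugates) to satisfy $z^N=1$ for a suitable $N$, so that $z$ is a root of unity. A parallel argument applied to $z-1$, either directly via the dual relation obtained by dividing both sides by $z^k$ (which converts the second equation into $(1-1/z)^k = 1 - 1/z^k$, the same form with $z$ replaced by $1/z$), or more robustly by invoking Kronecker's theorem once all Galois conjugates of the algebraic integer $z-1$ are shown to lie on the closed unit disc, then forces $z-1$ to be a root of unity as well. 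Once both $z$ and $z-1$ are roots of unity, the intersection of $|w|=1$ and $|w-1|=1$ in $\CC$ consists of only $\{e^{\pm i\pi/3}\}=\{-\omega,-\omega^2\}$, yielding $\theta=1/(z-1)\in\{\omega,\omega^2\}$, contrary to hypothesis.

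The main obstacle is simultaneously forcing $z$ and $z-1$ to be roots of unity from just two instances of the relation. The cyclotomic-matching step is delicate because the factor $(z-1)^n$ is not cyclotomic in $z$; one must use both exponents $m$ and $n$ together to extract a nontrivial order of vanishing. A potentially cleaner alternative is Beukers' original hypergeometric/Pad\'e-approximation route: one constructs auxiliary polynomial identities of the shape $P(x)(1-x)^s+Q(x)=x^{N+1}R(x)$ with $P,Q,R$ of controlled degree, then specializes $x=\theta/(1+\theta)$ and $s$ to rationals tied to $m$ and $n$; the controlled vanishing order at $x=0$ combined with the two hypotheses forces the only possible common solutions to lie in the explicit exceptional set $\{0,-1,\omega,\omega^2\}$ for $\theta$.
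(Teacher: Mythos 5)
The paper does not actually prove this statement; it is cited verbatim from Beukers (\cite[Theorem~4.1]{Beukers}) and then invoked as a black box for the odd-$bc$ case of Theorem~\ref{theorem:a=1}. So there is no internal proof to compare against, and your attempt must stand or fall on its own.

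It falls. Your setup is fine: the substitution $z=(1+\theta)/\theta$ turns the hypothesis $1+\theta^k-(1+\theta)^k=0$ into $(z-1)^k=z^k-1$, and the exceptional set $\{\omega,\omega^2\}$ does correspond to $\{z:|z|=|z-1|=1\}$. The gap is the step ``compare cyclotomic factorizations on the two sides, one aims to force $z^{\gcd(m,n)}=1$.'' The identity $\gcd(z^n-1,z^m-1)=z^{\gcd(m,n)}-1$ is a fact about polynomials in $\ZZ[z]$, not about a particular complex number $z$ at which the two sides of $(z-1)^{n-m}(z^m-1)=z^n-1$ happen to agree; nothing about that numerical equality lets you match cyclotomic factors. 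Worse, the conclusion you aim for is self-defeating. If $z^{\gcd(m,n)}=1$ with $g=\gcd(m,n)$, then $z^m=(z^g)^{m/g}=1$, so the original relation gives $(z-1)^m=z^m-1=0$, forcing $z=1$ outright --- a contradiction with $z\neq 1$ rather than a derivation of $|z|=|z-1|=1$. In particular, your proposed conclusion would rule out the legitimate solutions as well: for $\theta=\omega$ (i.e.\ $z=-\omega$) one has solutions at every odd $n$ with $3\nmid n$, e.g.\ $m=5$, $n=7$, where $\gcd(m,n)=1$ but $z\neq 1$, so $z^{\gcd(m,n)}\neq 1$. So the cyclotomic-matching step cannot be made to work as stated.

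The parallel Kronecker route is not developed either: you would need to show that $z$ (or $z-1$) is an algebraic integer all of whose conjugates lie on the closed unit disc, and that is precisely the hard content of the theorem, not something that falls out of the two relations for free. The final paragraph gestures at Beukers' hypergeometric/Pad\'e approximation method, which is indeed how the theorem is proved in \cite{Beukers}; but a citation of the method is not a proof, and the preceding argument that you hoped would substitute for it does not hold up.
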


If both $b$ and $c$ are odd, then Beukers's result shows that there are no solutions to~\eqref{equation:a=1} apart from $0$, $-1$, $\omega$, or $\omega^2$. We now treat the cases when at least one of $b$ or $c$ is even. Our proof has some points in common with Beukers's approach, but is also different in some details. When $b\le 5$ there are no roots of $1+x^b+(-1-x)^b$ apart from $0$, $-1$, $\omega$, $\omega^2$, and so we may assume in what follows that $b\ge 6$.

\begin{lemma}
\label{lemma:trivial:factors}
For integers $n\ge 2$, the polynomial $P_n(z)\colonequals 1+z^n+(-1-z)^n$ has degree $n$ if $n$ is even, and degree $n-1$ if $n$ is odd; it factors as $C_n(z)Q_n(z)$ where $C_n(z)$ equals
\begin{alignat*}2
1 &\quad\text{ for }n\equiv 0 \bmod 6;\\
z(z+1)(z^2+z+1)^2 &\quad\text{ for }n\equiv 1 \bmod 6;\\
(z^2+z+1) &\quad\text{ for }n\equiv 2 \bmod 6;\\
z(z+1) &\quad\text{ for }n\equiv 3 \bmod 6;\\
(z^2+z+1)^2&\quad\text{ for }n\equiv 4 \bmod 6;\\
z(z+1)(z^2+z+1)&\quad\text{ for }n\equiv 5 \bmod 6.
\end{alignat*}
In particular, the degree of $Q_n(z)$ is a multiple of six; the zeros of $Q_n(z)$ differ from $0$, $-1$, $\omega$, $\omega^2$ and occur in groups of six, with equal numbers of zeros on:
\begin{enumerate}[\quad \rm(1)]
\item the open line segments $\real(z) =-1/2$ going from $\omega$ to $-1/2+i\infty$, and its conjugate segment going from $\omega^2$ to $-1/2 - i\infty$;
\item the open arc of the unit circle going counterclockwise from $\omega$ to $\omega^2$;
\item the open arc of the circle $|z+1| = 1$ going counterclockwise from $\omega^2$ to $\omega$.
\end{enumerate}
Specifically, suppose $\alpha\colonequals -1/2 + it$ is a zero with $t>\sqrt{3}/2$. Then:
\begin{enumerate}[\quad \rm(i)]
\item $\alpha$ and $\bar{\alpha}=-1-\alpha$ are zeros on the conjugate line segments as above;
\item $\bar{\alpha}/\alpha=(-1-\alpha)/\alpha$ and $\alpha/\bar{\alpha}=-\alpha/(1+\alpha)$ are zeros lying on the arc of $|z|=1$;
\item $1/\alpha$ and $1/\bar{\alpha}$ are zeros lying on the arc of $|z+1|=1$.
\end{enumerate}
\end{lemma}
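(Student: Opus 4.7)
My plan is to proceed in three stages, following the structure of the lemma.

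First, I would compute $\deg P_n$ and extract the trivial factor $C_n$ by direct evaluation. Writing $(-1-z)^n = (-1)^n(1+z)^n$, the leading $z^n$ contributions reinforce for $n$ even (giving $\deg P_n = n$) and cancel for $n$ odd (leaving a $-nz^{n-1}$ term, so $\deg P_n = n-1$). To find $C_n$ I evaluate $P_n$, $P_n'$, and $P_n''$ at each of $0, -1, \omega, \omega^2$: the values $P_n(0) = P_n(-1) = 1 + (-1)^n$ and $P_n(\omega) = 1 + \omega^n + \omega^{2n}$ (using $-1-\omega = \omega^2$) detect the roots, while $P_n'(z) = nz^{n-1} - n(-1-z)^{n-1}$ is nonzero at $0$ and $-1$ (so these are always simple) and vanishes at $\omega$ precisely when $n \equiv 1 \pmod 3$; in that case $P_n''(\omega) = n(n-1)(\omega^{n-2} + \omega^{2(n-2)}) = -n(n-1) \ne 0$, so $\omega$ has multiplicity exactly two. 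A case analysis on $n \bmod 6$ reproduces the table for $C_n$, after which the divisibility of $\deg Q_n = \deg P_n - \deg C_n$ by six is an arithmetic check.

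Next I set up the symmetry group of the root set. The identity $P_n(-1-z) = P_n(z)$ is immediate, and $z^n P_n(1/z) = z^n + 1 + (-1-z)^n = P_n(z)$ follows from $z^n \cdot (-1 - 1/z)^n = (-z-1)^n = (-1-z)^n$. Hence the M\"obius involutions $\sigma(z) = -1 - z$ and $\tau(z) = 1/z$ preserve the zero set of $P_n$, and the relation $(\sigma\tau)^3 = \mathrm{id}$ gives $\langle \sigma, \tau \rangle \cong S_3$; together with complex conjugation this yields an $S_3 \times C_2$ action on the zeros of $Q_n$. The three loci (1), (2), (3) are precisely the fixed sets of the maps $z \mapsto \sigma(\bar z)$, $z \mapsto \tau(\bar z)$, and $z \mapsto \rho(\bar z)$, with $\rho = \sigma\tau\sigma$ acting as $z \mapsto -z/(z+1)$: indeed $\bar z = -1-z$ iff $\real(z) = -1/2$, $\bar z = 1/z$ iff $|z| = 1$, and $\bar z = -z/(z+1)$ iff $|z+1| = 1$. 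Given a zero $\alpha = -1/2 + it$ of $Q_n$ with $t > \sqrt{3}/2$, the six points in (i), (ii), (iii) are exactly the $S_3$-orbit of $\alpha$, each a zero of $P_n$ by the symmetry; the equality $|\alpha| = |1+\alpha|$ valid on locus (1) then gives $|\bar\alpha/\alpha| = |\bar\alpha|/|\alpha| = 1$ and $|1 + 1/\alpha| = |1+\alpha|/|\alpha| = 1$, placing the orbit elements on the indicated loci.

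The substantive content is to show that every zero of $Q_n$ lies on one of the three loci, equivalently that the $S_3 \times C_2$-orbits of zeros have size six and not twelve. Real zeros are ruled out first: for $n$ even $P_n(z) = 1 + z^n + (1+z)^n$ is strictly positive on $\RR$; for $n$ odd the symmetry $P_n(z) = P_n(-1-z)$ together with $P_n(-1/2) = 1 - 2^{1-n} > 0$ and $P_n(z) \to -\infty$ as $|z| \to \infty$ traps real zeros at $\{0, -1\}$, both trivial. For non-real zeros I would use the $S_3$-invariant $F(z) = z^2(z+1)^2/(z^2+z+1)^3$: since $F$ has real coefficients, $F(z) \in \RR$ iff $\bar z$ and $z$ lie in the same $S_3$-orbit, which, by casework on the six elements of $S_3$, occurs iff $z$ is real or on one of the three loci. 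The claim thus reduces to showing that the polynomial $G_n(F)$ of degree $\deg Q_n/6$, whose roots are the $F$-values of the $S_3$-orbits of zeros of $Q_n$, has only real roots (all negative, by the range of $F$ on each locus). This real-rootedness is the technical heart of the proof; I would establish it by counting zeros of $P_n$ directly on each locus via its natural real parameterization --- for example on (1), with $z = -1/2 + it$, the equation becomes $1 + 2(1/4 + t^2)^{n/2} \cos(n \arg(-1/2+it)) = 0$, whose sign changes as $t$ ranges over $(\sqrt{3}/2, \infty)$ can be enumerated by comparing the oscillation rate of the cosine factor with the monotone amplitude --- and verifying that the total count across all three loci matches $\deg Q_n$. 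The main difficulty is this uniform sign-change enumeration in $n$, carefully handling the exact order of vanishing at the endpoints $\omega, \omega^2$ (dictated by $C_n$) and at the trivial midpoints $z = -1$ on locus (2) and $z = 0$ on locus (3).
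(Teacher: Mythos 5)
Your proposal follows essentially the same route as the paper: extract the trivial factor $C_n$ by direct evaluation, use the M\"obius symmetries $z\mapsto -1-z$ and $z\mapsto 1/z$ (generating an $S_3$ action, combined with conjugation) to show that a single zero $\alpha=-\tfrac12+it$ on the line segment yields the other five on the two circular arcs, and then produce the right number of zeros on the segment by a sign-change count of $P_n(-\tfrac12+it)=1+2\cos(n\theta)/(2|\cos\theta|)^n$ as $\theta$ runs over $(\pi/2,2\pi/3)$, matching $\deg Q_n$.

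The one place you depart is the detour through the $S_3$-invariant rational function $F(z)=z^2(z+1)^2/(z^2+z+1)^3$ and the auxiliary polynomial $G_n$ whose real-rootedness would certify that all orbits lie on the loci. This is a legitimate packaging, but it is not needed: once the sign-change count produces $\deg Q_n/6$ simple zeros on the segment, the orbit map already yields $\deg Q_n$ distinct zeros on the three loci, exhausting $Q_n$ and thereby ruling out any zero off the loci without invoking $F$ at all. Conversely, your framing has the minor advantage of making the ``every zero lies on a locus'' statement an a priori structural fact rather than an a posteriori count. Both you and the paper leave the actual mod-$6$ sign-change bookkeeping as a case analysis; that step, while elementary, is the genuine technical content of the lemma and would need to be carried out to complete either version.
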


\begin{proof}
The first assertion on identifying the possible zeros at $0$, $-1$, $\omega$, $\omega^2$ is readily checked. We now produce the right number of zeros on the line segment $-1/2+it$ with $t>\sqrt{3}/2$ by counting sign changes; the remaining zeros will stem from these zeros $\alpha$ by taking $\bar{\alpha}$, $(-1-\alpha)/\alpha$, $-\alpha/(1+\alpha)$, $1/\alpha$ and $1/\bar{\alpha}$.

Write $z=-1/2+it$ as $z= -1/2 (1+i\tan\theta)= -e^{i\theta}/(2\cos\theta)$, where $\theta$ decreases from~$2\pi/3$ (when $z=-1/2+i\sqrt{3}/2$) to $\pi/2$ (when $z=-1/2+i\infty$). Note that $2\cos\theta$ goes from $-1$ to $0$ as $\theta$ decreases from $2\pi/3$ to $\pi/2$. Then
\[
P_n(z)\ =\ 1+2\cos(n\theta)/(-2\cos\theta)^n\ =\ \frac{2\cos(n\theta)+(2|\cos\theta|)^n}{(2|\cos\theta|)^n}.
\]
Clearly this is real valued, and has the same sign as the numerator, which is positive for values $\theta \in (\pi/2, 2\pi/3)$ with $n\theta \equiv 0 \bmod {2\pi}$, and negative for values $\theta \in (\pi/2, 2\pi/3)$ with~$n\theta \equiv \pi \bmod {2\pi}$. Upon splitting $n$ into progressions $\bmod \ 6$, and counting the sign changes produced in this way, we find that all the zeros of $P_n(z)$ are accounted for.
\end{proof}

Let $\calZ(b,c)$ denote the set of common zeros of the polynomials in~\eqref{equation:a=1}, excluding possible zeros at $0$, $-1$ or cube roots of unity. In other words, $\calZ(b,c)$ is the set of complex roots of $\gcd(Q_b(z),\ Q_c(z))$. We wish to show that this set is empty, and assume for the sake of contradiction that this is not the case. Naturally if $\alpha$ is a common zero, then so are all its Galois conjugates, as well as $1/\alpha$ (and its Galois conjugates), and~$(-1-\alpha)/\alpha$ together with its Galois conjugates. Let $\zeta$ denote an
element of $\calZ(b,c)$ of largest absolute value, and let $r$ denote this absolute value.

\begin{lemma}
\label{lemma:14:9}
Suppose that one of $b$ or $c$ is even. If $\calZ(b,c)$ is nonempty, then it contains an element with absolute value $r > 14/9$.
\end{lemma}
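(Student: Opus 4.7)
The plan is by contradiction: assume every element of $\calZ(b,c)$ has absolute value at most $14/9$, so in particular $r\le 14/9$ for the maximum-modulus element~$\zeta$. By Lemma~\ref{lemma:trivial:factors}, $\zeta$ must lie on the vertical segment $\real(z)=-1/2$; write $\zeta=-1/2+it$, so $r^2=1/4+t^2\in(1,196/81]$.

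Without loss of generality $b$ is even. Substituting $w=2\zeta+1$ into $P_b(\zeta)=0$ yields $(w-1)^b+(w+1)^b+2^b=0$, and since $b$ is even, dividing by $2$ produces a monic polynomial in $\ZZ[w]$; hence $\eta\colonequals 2\zeta+1=2it$ is a nonzero purely imaginary algebraic integer.

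Next I would leverage the $S_3$-symmetry of $\calZ(b,c)$ from Lemma~\ref{lemma:trivial:factors}: since its generators $\alpha\mapsto -1-\alpha$ and $\alpha\mapsto 1/\alpha$ have rational coefficients, the $S_3$-action commutes with $\operatorname{Gal}(\bar\QQ/\QQ)$, so every Galois conjugate of $\zeta$ together with its full six-element $S_3$-orbit lies in $\calZ(b,c)$, with absolute value at most~$r$. A short calculation using $\zeta+\bar\zeta=-1$ and $\zeta\bar\zeta=r^2$ gives $\zeta^2+\zeta+1=1-r^2$; the analogous computations at the other five orbit members $\bar\zeta,\,\bar\zeta/\zeta,\,\zeta/\bar\zeta,\,1/\zeta,\,1/\bar\zeta$ yield the identity
\[
\prod_{\alpha\in S_3\cdot\zeta}(\alpha^2+\alpha+1)\ =\ \frac{(r^2-1)^6}{r^8}.
\]

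The main obstacle is converting this orbit identity into a sharp contradiction, because $\zeta$ need not itself be an algebraic integer (only $\eta$ is). My plan is a resultant argument: let $F\in\ZZ[z]$ be the primitive integer polynomial of smallest degree vanishing on the full $\operatorname{Gal}\times S_3$-orbit of $\zeta$ in $\calZ(b,c)$. Then $F$ divides $\gcd(P_b,P_c)$ up to content, and $\Res(F,z^2+z+1)\in\ZZ\smallsetminus\{0\}$, so this resultant has absolute value at least~$1$. Expanding it via the orbit identity over each contributing $S_3$-orbit of magnitude $r_i\le r$, and using that $\zeta=(\eta-1)/2$ forces the leading coefficient of $F$ to be a bounded power of~$2$, one expresses $|\Res(F,z^2+z+1)|$ as a product of factors of the shape $(r_i^2-1)^6/r_i^8$ times an explicit power-of-$2$ normalization. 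At $r=14/9$ the factor $(r^2-1)^6/r^8=(115/81)^6/(196/81)^4$ is numerically small enough that, combined with the maximum admissible power of $2$, the resultant is forced to have absolute value strictly less than~$1$, a contradiction. The delicate technical step, and the main difficulty I anticipate, is the precise accounting of the $2$-adic content of $F$ against the orbit-product factors; that book-keeping is what pins down $14/9$ as the threshold.
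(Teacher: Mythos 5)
Your core idea is the same as the paper's: the crucial quantity is the value of a polynomial vanishing on $\calZ(b,c)$ at a primitive cube root of unity, which is a nonzero element of $\ZZ[\omega]$ (up to a content factor bounded by $2$) and hence has absolute value $\ge 1$, while the orbit structure forces it to be small. Your resultant $|\Res(F,z^2+z+1)|=a_F^2\prod_i|\alpha_i^2+\alpha_i+1|=a_F^2|f(\omega)|^2$ is exactly the square of the paper's quantity $a_F|f(\omega)|$, and your per-orbit factor $(r_i^2-1)^6/r_i^8$ is the square of the paper's $(t^2-3/4)^3/(1/4+t^2)^2$. So the numerics, once the leading coefficient is controlled, agree with the paper's.

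The gap is in how you propose to bound the leading coefficient $a_F$. The observation that $\eta=2\zeta+1$ is an algebraic integer when $b$ is even does \emph{not} yield $a_F\le 2$. If $\zeta$ has primitive minimal polynomial $m(z)=a_m z^d+\dots+a_0$, then $2\zeta$ being an algebraic integer only forces the divisibilities $a_m\mid 2^k a_{d-k}$, which leaves room for $a_m$ to be a large power of $2$; for example, $m(z)=4z^2+2z+1$ has $2\zeta$ an algebraic integer yet $a_m=4$. Moreover $\eta$ is defined only for the element $\zeta$ on the line $\real z=-1/2$, while $F$ also vanishes at orbit members such as $1/\zeta$ and $\bar\zeta/\zeta$, to which the substitution $z\mapsto(w-1)/2$ does not apply. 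The clean way to get $a_F\in\{1,2\}$ is the route the paper takes: the monic polynomial $f(x)=\gcd(Q_b,Q_c)$ divides (in $\QQ[x]$) whichever of $1+x^b+(-1-x)^b$, $1+x^c+(-1-x)^c$ has even exponent, and that polynomial lies in $\ZZ[x]$ with leading coefficient $2$; Gauss's lemma then forces $2f\in\ZZ[x]$. You flag the $2$-adic bookkeeping as the delicate step, and indeed it is where your argument, as stated, does not close. Replacing your $\eta$-based content bound with the divisibility-plus-Gauss argument would make the proof complete, at which point it would coincide with the paper's proof up to the resultant reformulation.
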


\begin{proof} Suppose to the contrary that $bc$ is even, and that all the elements in $\calZ(b,c)$ have absolute value bounded above by $14/9$. Consider the polynomial
\[
f(x)\colonequals\!\!\!\!\!\! \prod_{\alpha\in\calZ(b,c)} \!\!\!\!\!\! (x-\alpha).
\]
Note that $f(x)=\gcd(Q_b(x),\ Q_c(x))$ is a monic polynomial in $\QQ[x]$, and that it divides both $1+x^b +(-1-x)^b$ and $1+x^c+(-1-x)^c$. Since $b$ or $c$ is even, at least one of the polynomials $1+ x^b +(-1-x)^b$ or $1+x^c+(-1-x)^c$, that lie in $\ZZ[x]$, has leading coefficient~$2$. By unique factorization in $\ZZ[x]$, we conclude that $2f(x)$ must have integer coefficients.
Therefore $2f(\omega)$ is an element of~$\ZZ[\omega]$, and by the definition of $\calZ(b,c)$ we have $f(\omega) \neq 0$. It follows that
\[
2\!\!\!\!\!\! \prod_{\alpha \in \calZ(b,c)} \!\!\!\!\!\! |\omega -\alpha|\ =\ 2 |f(\omega)|\ \ge\ 1.
\]
Note that, as in Lemma~\ref{lemma:trivial:factors}, the zeros in $\calZ(b,c)$ occur in groups of $6$: if $\alpha =-\frac 12 +it$ lies in $\calZ(b,c)$, where $t > \sqrt{3}/2$, then so do $\bar{\alpha}$, $1/\alpha$, $1/\bar{\alpha}$, $-1-1/\alpha$, and $-1-1/\bar{\alpha}$. The contribution of such a group of $6$ to the product above is
\begin{multline*}
\Big|(\alpha-\omega)(\bar{\alpha}-\omega)(1/\alpha-\omega)(1/\bar{\alpha}-\omega)(\omega^2-1/\alpha)
(\omega^2-1/\bar{\alpha}) \Big| \\
\ =\ \frac{|\alpha^2+\alpha+1|^3}{|\alpha|^4}
\ =\ \frac{(t^2 -3/4)^3}{(1/4+t^2)^2}.
\end{multline*}
If $|\alpha| ={(1/4+t^2)}^{1/2} \le 14/9$, then the above is no greater than $0.4887\ldots < 1/2$, which gives a contradiction.
\end{proof}

Our next lemma treats the case when $c$ is small:

\begin{lemma}
\label{lemma:c:small}
Suppose that one of $b$ or $c$ is even and that $\calZ(b,c) \neq \emptyset$. Let $r$ be largest absolute value of an elements in $\calZ(b,c)$. Then $c$ must be larger than $\pi r^b/2$.
\end{lemma}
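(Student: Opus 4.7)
The plan is to prove the bound by contradiction, assuming $c\le\pi r^b/2$ and exploiting the resulting Diophantine structure. First I would invoke Lemma~\ref{lemma:14:9} and Lemma~\ref{lemma:trivial:factors} to conclude that $\zeta$ must lie on the vertical segment $\operatorname{Re}(z)=-1/2$: the other four zeros in its six-element orbit have modulus at most $1$, so they cannot attain the maximum $r>14/9$. Writing $\zeta=re^{i\theta}$ with $\theta\in(\pi/2,2\pi/3)$ and using $-1-\zeta=\bar\zeta$, one has $P_n(\zeta)=1+2r^n\cos(n\theta)$, so the hypotheses $P_b(\zeta)=P_c(\zeta)=0$ translate into $\cos(b\theta)=-1/(2r^b)$ and $\cos(c\theta)=-1/(2r^c)$. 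These are tiny in absolute value, forcing $b\theta$ and $c\theta$ to lie within $\pi/(4r^b)$ and $\pi/(4r^c)$ respectively of odd multiples of $\pi/2$.

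Writing $b\theta=m\pi/2+\delta_b$ and $c\theta=n\pi/2+\delta_c$ with $m,n$ positive odd integers, and equating the two expressions for $\theta$, yields the key Diophantine inequality
\[ |mc-nb|\ \le\ \frac{c}{2r^b}+\frac{b}{2r^c}. \]
Under the contradictory hypothesis $c\le\pi r^b/2$ one has $c/(2r^b)\le\pi/4$, so if $mc\ne nb$ then $|mc-nb|\ge 1$ forces $r^c\le 2b/(4-\pi)$, and hence $c\le\log(2b/(4-\pi))/\log(14/9)$; a direct numerical check rules this out for all integers $b\ge 6$ with $c>b$.

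The main obstacle is the remaining case $mc=nb$. Since one of $b,c$ is even while $m,n$ are odd, both $b$ and $c$ must be even with $\nu_2(b)=\nu_2(c)\ge 1$. Setting $e=\gcd(b,c)$, $b=eb'$, $c=ec'$ with $b',c'$ odd and coprime, the relation $mc=nb$ forces $m=b'k$, $n=c'k$ for some odd positive integer $k$, and consequently $\theta=k\pi/(2e)+\eta$ with $\eta=\delta_b/b=\delta_c/c$. The two constraints $|\eta|\in[1/(2br^b),\pi/(4br^b)]$ and $|\eta|\in[1/(2cr^c),\pi/(4cr^c)]$, coming from $|\delta_b|=\arcsin(1/(2r^b))\in[1/(2r^b),\pi/(4r^b)]$ and its analogue for $\delta_c$, must overlap, which forces $(c/b)\,r^{c-b}\le\pi/2$. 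But $b,c$ even with $b<c$ gives $c-b\ge 2$, and $r>14/9$ then yields $r^{c-b}\ge(14/9)^2=196/81>\pi/2$, a contradiction. Both inputs $r>14/9$ and $b\ge 6$ are used essentially, and the numerical inequality $196/81>\pi/2$ is tight enough that no significant slack remains.
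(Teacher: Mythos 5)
Your proof is correct but takes a genuinely different route from the paper. The paper applies $P_b(1/\zeta) = 0 = P_c(1/\zeta)$ to obtain the identity $(-1-\zeta^{-b})^c = (-1-\zeta^{-c})^b$, takes a complex logarithm to turn this into the condition that a certain convergent power series lies in $\pi i\ZZ$, and then dispatches the estimate with a single pair of triangle-inequality bounds showing the series is simultaneously nonzero and less than $\pi$ in modulus; no case distinction is needed. You instead exploit the geometry from Lemma~\ref{lemma:trivial:factors}: placing the max-modulus element $\zeta$ on the line $\real(z)=-1/2$ turns $P_b(\zeta)=0$ into $\cos(b\theta)=-1/(2r^b)$, which leads to a Diophantine inequality on $|mc-nb|$ requiring a split into $mc\ne nb$ (integrality plus a short numerical check for all $b\ge 6$) and $mc=nb$ (a parity argument forcing $b,c$ both even with $\nu_2(b)=\nu_2(c)$, followed by the overlap constraint $(c/b)r^{c-b}\le\pi/2$, which fails since $c-b\ge 2$ and $(14/9)^2>\pi/2$). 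Your approach is more elementary---no multi-valued logarithms---and makes the underlying Diophantine structure explicit, at the cost of a case split and a bit more numerical verification. Both proofs hinge on the input $r>14/9$ from Lemma~\ref{lemma:14:9} and the standing assumption $b\ge 6$.
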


\begin{proof}
Let $\zeta\in\calZ(b,c)$ have maximal absolute value $r$. Since $1/\zeta$ must also be in $\calZ(b,c)$, we have
\[
\Big(-1-\frac{1}{\zeta^b}\Big)^c
\ =\ {\left[\Big(-1-\frac 1{\zeta}\Big)^b\right]}^c
\ =\ {\left[\Big(-1-\frac 1{\zeta}\Big)^c\right]}^b
\ =\ \Big(-1-\frac{1}{\zeta^c}\Big)^b.
\]
Taking logarithms, we see that
\begin{equation}
\label{equation:log}
\sum_{\ell=1}^{\infty} \frac{(-1)^{\ell-1}}{\ell} \Big(\frac{c}{\zeta^{b\ell}} - \frac{b}{\zeta^{c\ell}}\Big)\ \in\ \pi i \ZZ.
\end{equation}
However, by the triangle inequality, the quantity in~\eqref{equation:log} is bounded in absolute value by
\[
\sum_{\ell=1}^{\infty} \frac{1}{\ell} \Big(\frac{c}{r^{b\ell}} + \frac{b}{r^{c\ell}}\Big)\ \le\ \sum_{\ell=1}^{\infty} \frac{c + b/r}{r^{b\ell}}\ \le\ \frac{c(1+1/r)}{r^b-1}\ <\ \frac{2c}{r^b},
\]
since $r> 14/9$ by Lemma~\ref{lemma:14:9}. Thus, if $c \le \pi r^b/2$, then the quantity to the left in~\eqref{equation:log} is less than~$\pi$ in absolute value, so it must be zero.

But the triangle inequality also shows that the quantity in \eqref{equation:log} is bounded below in absolute value by
\begin{multline*}
\frac{c}{r^b} - \frac{b}{r^c} - \sum_{\ell=2}^{\infty}\frac{1}{\ell} \Big(\frac{c}{r^{b\ell}}+\frac{b}{r^{c\ell}}\Big)
\ >\
\frac{c}{r^b} - \frac{c}{r^c} - \sum_{\ell=2}^{\infty}\frac{c}{r^{b\ell}}\\
\ =\
\frac{c}{r^b} - \frac{c}{r^{c}} - \frac{c}{r^{b}(r^b-1)}
\ >\
\frac{c}{r^b}\Big( 1- \frac{1}{r} - \frac{1}{r^b-1}\Big).
\end{multline*}
Since $r >14/9$ and $b\ge 6$, the quantity above is strictly positive, and we have arrived at a contradiction. This proves the lemma.
\end{proof}

It remains to deal with the case when $c$ is large, specifically, $c > \pi r^b/2$. To handle this, we require a result on diophantine approximation due to Laurent, Mignotte, and Nesterenko \cite{LMN}; the formulation that we record below follows from \cite[Theorem~2.6]{Bugeaud} with a little cleaning up. By the \emph{primitive minimal polynomial} of an algebraic number $\alpha$ we mean the primitive polynomial $a_0 x^{d} + a_1 x^{d-1} + \dots + a_d \in \ZZ[x]$ of least degree with~$\alpha$ as a root, and $a_0$ a positive integer. In this case, the \emph{absolute height} of $\alpha$ is
\[
h(\alpha) \colonequals \frac 1d \Big(\log a_0 + \sum_\sigma\log\max\left\{1,\ |\sigma(\alpha)|\right\}\Big),
\]
where the elements $\sigma(\alpha)$ are the Galois conjugates of $\alpha$.

\begin{lemma}
\label{lemma:LMN}
Let $\alpha$ be an algebraic number of absolute value $1$ that is not a root of unity, and let $d$ be its degree. Let $h(\alpha)$ denote the absolute height of $\alpha$ as above. Then, for any positive integer $k$, we have
\[
|\alpha^k -1|\ \ge\ \exp\Big(-\frac{9}{8} \big(22\pi + dh(\alpha)\big) \big(\max\left\{34,\ d\log (k/2) + 10\right\}\big)^2 \Big).
\]
\end{lemma}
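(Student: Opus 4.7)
The plan is to deduce the lemma directly from Theorem~2.6 of \cite{Bugeaud}, which is the Laurent--Mignotte--Nesterenko lower bound for linear forms in two complex logarithms; the ``little cleaning up'' promised in the preamble amounts to choosing convenient auxiliary data and repackaging the numerical constants.

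The first step is to reduce the lower bound on $|\alpha^k-1|$ to a lower bound on an explicit nonzero linear form in two logarithms. Writing $\alpha=e^{i\theta}$ with $\theta\in(-\pi,\pi]$ and letting $m$ denote the integer nearest $k\theta/(2\pi)$, one has $|k\theta-2\pi m|\le\pi$, and this quantity is nonzero since $\alpha$ is not a root of unity. The elementary estimate $\sin(\varphi/2)\ge\varphi/\pi$ for $\varphi\in[0,\pi]$ then gives
\[
|\alpha^k-1|\ =\ 2\,\bigl|\sin\bigl((k\theta-2\pi m)/2\bigr)\bigr|\ \ge\ \frac{2}{\pi}\,|\Lambda|,
\]
where $\Lambda\colonequals k\log\alpha-2m\cdot\pi i = k\log\alpha-(2m)\log(-1)$ and $\log\alpha=i\theta$ is the principal branch. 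The reason for writing $2\pi i$ as $2\log(-1)$, rather than as $\log 1$, is that the LMN framework requires $\alpha_1,\alpha_2\ne 1$; this choice also keeps all algebraic data inside $\QQ(\alpha)$.

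The second step is to apply Bugeaud's Theorem~2.6 to $\Lambda$ with $\alpha_1=\alpha$, $\alpha_2=-1$, $b_1=k$, $b_2=2m$. The containing field $\QQ(\alpha)$ has degree $d$, and one has $h(-1)=0$, $|\log(-1)|=\pi$, and $|\log\alpha|\le\pi$. The height parameters in Bugeaud's theorem can therefore be taken to satisfy $dA_2=\pi$ and $dA_1=\max\{d\,h(\alpha),1\}$, while the parameter $B$ controlling the integer coefficients is essentially $\max(k,2|m|)\le k$, giving $\log B\le\log(k/2)+\log 2$. Substituting these choices into Theorem~2.6 yields a bound of shape $-C_{1}\bigl(dA_{1}+dA_{2}\bigr)\bigl(C_{2}+d\log(k/2)\bigr)^{2}$ which, after collecting the absolute constants from Bugeaud's statement, collapses into
\[
\log|\Lambda|\ \ge\ -\tfrac{9}{8}\bigl(22\pi+d\,h(\alpha)\bigr)\bigl(\max\{34,\ d\log(k/2)+10\}\bigr)^{2};
\]
the threshold $34$ is taken verbatim from Bugeaud's formulation, the $22\pi$ arises from the $dA_{2}=\pi$ contribution after multiplication by the numerical constants in the theorem, and the factor $2/\pi$ lost in the initial reduction is absorbed into the additive constants.

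The principal obstacle is not mathematical depth but careful bookkeeping: one must track several absolute constants through Bugeaud's statement and verify that they collapse precisely into the coefficients $\tfrac{9}{8}$, $22\pi$, $34$, and $10$. The only mildly delicate point is keeping all coefficients of the linear form integral and inside $\QQ(\alpha)$, which is why we take $\alpha_{2}=-1$ with coefficient $2m$ rather than attempting to use a branch of $\log 1$ directly.
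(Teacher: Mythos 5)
The paper does not give a proof of this lemma: it simply cites Bugeaud's Theorem~2.6 and remarks that the stated form follows ``with a little cleaning up.'' Your sketch captures the right structure of that cleaning up. The reduction $|\alpha^k-1| = 2|\sin((k\theta-2\pi m)/2)| \ge (2/\pi)|\Lambda|$ with $\Lambda = k\log\alpha - 2m\log(-1)$ is correct and standard, and taking $\alpha_2 = -1$ with coefficient $2m$ keeps the algebraic data inside $\QQ(\alpha)$ while avoiding the degenerate choice $\alpha_2 = 1$.

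That said, there are two gaps. First, the LMN/Bugeaud height parameter for $\alpha_1 = \alpha$ must dominate a contribution from $|\log\alpha|$ as well as from $h(\alpha)$; since $|\log\alpha| = |\theta|$ can be as large as $\pi$, your specification $dA_1 = \max\{dh(\alpha),\ 1\}$ is too small and needs enlarging, and this is presumably where the $22\pi$ in the final bound actually originates. Second, the substance of the lemma is exactly the numerical bookkeeping you defer: you assert that Bugeaud's constants ``collapse'' into $\frac{9}{8}$, $22\pi$, $34$, and $10$, but you do not carry out the computation, and the factor $2/\pi$ lost in your initial reduction also has to be fed into it. The paper itself offers no more detail than you do, so your account is an honest reading of what the authors have in mind, but as a proof the quantitative core remains unverified.
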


\begin{proof}[Proof of Theorem~\ref{theorem:a=1}]
Let $\zeta$ be an element of the set $\calZ(b,c)$ with maximal absolute value $r\colonequals|\zeta|$, and take $\alpha = -1 -1/\zeta$, so that $\alpha$ is an element of $\calZ(b,c)$ with $|\alpha|=1$. Note that $\alpha$ cannot be a root of unity, else some conjugate of $\alpha$ will not lie on the arc from $\omega$ to~$\omega^2$. Since $\alpha$ is a root of $1+x^b+(-1-x)^b$, the degree $d$ of $\alpha$ is at most $b$. Since one of $b$ or $c$ is even,~$\alpha$ satisfies a polynomial in $\ZZ[x]$ with leading coefficient $2$, so that the primitive minimal polynomial of $\alpha$ in $\ZZ[x]$ has leading coefficient $1$ or $2$. Since only one third of the elements of $\calZ(b,c)$ have absolute value exceeding $1$, and these absolute values are bounded above by $r$, we conclude that
\[
dh(\alpha)\ \le\ \log 2 + \frac{b}{3} \log r.
\]
Appealing to Lemma~\ref{lemma:LMN}, we conclude that for any positive integer $k$ one has
\begin{equation}
\label{equation:LMN:1}
|\alpha^k -1|\ \ge\ \exp\Big(-\frac{9}{8} \big(70+ \frac b3 \log r\big) \big(\max\left\{34,\ b\log (k/2) + 10\right\}\big)^2 \Big).
\end{equation}
Since $\alpha$ is a root of $1+x^c +(-1-x)^c$, and $|-1-\alpha| = 1/r$, we have
$|1 + \alpha^c |\le 1/r^c$ so
\begin{equation}
\label{equation:LMN:2}
|\alpha^{2c}-1|\ \le\ \frac{2}{r^c}.
\end{equation}

On the other hand, assuming that $c\ge e^5$ and using that $b\ge 6$, we may simplify the bound in~\eqref{equation:LMN:1} to yield
\begin{multline*}
|\alpha^{2c}-1|
\ \ge\
\exp\Big(-\frac{9}{8}\big(70+\frac{b}{3}\log r\big) (b\log c +10)^2 \Big)\\
\ \ge\
\exp\Big( - 2 b^2 (\log c)^2 \big(70+\frac b3\log r \big)\Big).
\end{multline*}
Comparing this with~\eqref{equation:LMN:2}, we obtain a contradiction unless
\[
c\log r\ \le\ \log 2 + 2b^2 (\log c)^2 \big( 70 + \frac{b}{3} \log r\big).
\]
Since $r>14/9$ by Lemma~\ref{lemma:14:9}, the above bound, under the assumption $c\ge e^5$, implies that
\begin{equation}
\label{equation:LMN:3}
\frac{c}{(\log c)^2}
\ \le\
\frac{\log 2}{\log (14/9) (\log c)^2} + 2b^2 \Big(\frac{70}{\log (14/9)} + \frac b3\Big)
\ \le\
320 b^2 + 2b^3/3.
\end{equation}

If $b\ge 43$, then by Lemmas~\ref{lemma:14:9} and~\ref{lemma:c:small} we see that $\calZ(b,c)=\emptyset$ unless $c \ge (\pi/2)(14/9)^b$. But a small calculation shows that this lower bound for $c$, which is much bigger than~$e^5$, contradicts the upper bound imposed in \eqref{equation:LMN:3}. Thus we conclude that $\calZ(b,c) = \emptyset$ whenever $c > b \ge 43$.

For $6\le b\le 42$ it is easy to check that after accounting for the zeros at $0$, $-1$, $\omega$, $\omega^2$, the remaining part of the polynomial $1+x^b +(-1-x)^b$, denoted earlier by $Q_b(x)$, is irreducible. This allows us to obtain improved estimates for the size of $r$ in Lemma~\ref{lemma:14:9}, thereby obtaining a larger lower bound for $c$ in Lemma~\ref{lemma:c:small}. For all $17\le b\le 42$, the polynomial $1+x^b + (-1-x)^b$ has a root of size at least $2.72$, so that in these cases we may use $r\ge 2.72$, and $c\ge (\pi/2) (2.72)^b$; this bound can be checked to contradict \eqref{equation:LMN:3}. Thus $\calZ(b,c) = \emptyset$ for $c> b\ge 17$. When~$b$ equals~$12$, $14$, or $16$, there is a root of size $r \ge 3.83$, and our argument applies in these cases as well.

The case $b=6$ is covered by \cite[Theorem~2.11]{CKW}, while the case $b=7$ does not arise, since $1+x^7+(-1-x)^7$ only has roots at $0$, $-1$, $\omega$, $\omega^2$. When $b=9$, the nontrivial factor of~$1+x^9+(-1-x)^9$ is a primitive irreducible polynomial of degree $6$, with leading coefficient $3$, and therefore cannot divide $1+x^c+(-1-x)^c$ for $c$ even, since this polynomial has leading coefficient $2$. Similarly, when $b=15$, the nontrivial factor of $1+x^{15}+(-1-x)^{15}$ is a primitive irreducible polynomial of degree $12$, with leading coefficient $15$, and once again this cannot divide $1+x^c+(-1-x)^c$ for $c$ even.

We are left with four remaining cases, $b=8$, $10$, $11$, and $13$, where an additional small computation is needed to check the theorem. We illustrate this calculation in the case $b=8$, the other cases being similar. The nontrivial factor of $1+x^8+(-1-x)^8$ has degree $6$, with a root of largest absolute value at
\[
\zeta \approx -\frac 12 + 2.513228157188 i.
\]
It follows from Lemma~\ref{lemma:c:small} that $\calZ(8,c)=\emptyset$ for $8 < c \le 2500$, while from \eqref{equation:LMN:3} it follows that $\calZ(8,c)=\emptyset$
for $c>5 \times 10^6$. To handle the remaining range for $c$, write $(1+1/\zeta^8)$ as~$e^{i\theta}$ with $\theta = -0.0005379141\ldots$, so that by \eqref{equation:log} we have, for some integer $m$,
\[
|c\theta + m\pi|\ \le\ 8\sum_{\ell=1}^{\infty}\frac{1}{\ell |\zeta|^{c\ell}}\ \le\ 9\times(2.5)^{-c}\ <\ (2.5)^{-2400}.
\]
Thus $m\pi/|\theta|$ must be extremely close to the integer $c$. Now
\[
\pi/|\theta| = 5840.32375784959\ldots,
\]
and since $2500 < c \le 5\times 10^6$, we may restrict attention to integers $m$ that lie in the range $1\le m\le 1000$. A rapid calculation (for instance by examining the continued fraction expansion of $\pi/|\theta|$) shows that there are no $m$ in this range with $m\pi/|\theta|$ being extremely close to an integer, which completes our treatment of the case $b=8$.
\end{proof}

\section{Power sums in three variables: the general case}
\label{section:abc:general}

Adapting the argument from the previous section, we establish the more general result:

\begin{theorem}
\label{theorem:abc:almost:all}
Let $2 \le a < b <c$ be integers with $2\mid abc$, and $\gcd(a,b,c)=1$. Suppose that the system of equations 
\[
1+x^a+y^a\ =\ 1+x^b+y^b\ =\ 1+x^c+y^c\ =\ 0
\]
has a solution where $x$ and $y$ are not cube roots of unity. Then:
\begin{enumerate}[\quad \rm(1)]
\item\label{theorem:abc:almost:all:1} We have $b < 600 a^2 2^a$. 

\item\label{theorem:abc:almost:all:2} If exactly one of $a$, $b$, $c$ is even, then $b < 600 a^2$. 

\item\label{theorem:abc:almost:all:3} For each $b$ in the range $a< b < 600 a^2 2^a$, there are at most finitely many possible choices for $c$. 
\end{enumerate}
\end{theorem}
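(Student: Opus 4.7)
The plan is to adapt the proof of Theorem~\ref{theorem:a=1} to $a\ge 2$. Suppose $(x,y)\in\CC^2$ is a common zero of the three power sums with neither coordinate a cube root of unity, and use the $S_3$-symmetry permuting $\{1,x,y\}$ to pass to an orbit representative $(\zeta,\eta)$ with $r\colonequals|\zeta|$ maximal over the orbit; the orbit's six $x$-coordinates are $\{x,y,1/x,1/y,x/y,y/x\}$, and in the chosen representative one may assume $|\zeta|\ge|\eta|\ge 1$. I would eliminate $y$: a direct calculation using the factorization of $y^a+x^a+1=\prod_k(y-\omega^k\zeta_0)$ with $\zeta_0^a=-(x^a+1)$ gives, writing $d\colonequals\gcd(a,n)$,
\[
\Res_y\bigl(y^a+x^a+1,\ y^n+x^n+1\bigr)\ =\ P_n(x)^{d},
\qquad
P_n(x)=(x^n+1)^{a/d}-(-1)^{(a+n)/d}(x^a+1)^{n/d}.
\]
The leading coefficient of $P_n$ is $2$ when $\nu_2(a)\ne\nu_2(n)$ and $\pm n/d$ otherwise; its degree is $an/d$ or $an/d-a$ respectively. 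So $\zeta$ is a root of both $P_b$ and $P_c$.

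The next step is the analog of Lemma~\ref{lemma:14:9}. Let $f(x)\in\ZZ[x]$ be the primitive gcd of $P_b$ and $P_c$, after dividing out the trivial factors corresponding to $x\in\{0,-1,\omega,\omega^2\}$. Since $2\mid abc$ and $\gcd(a,b,c)=1$, not all three of $\nu_2(a),\nu_2(b),\nu_2(c)$ can coincide, so at least one of $\mathrm{lc}(P_b),\mathrm{lc}(P_c)$ equals $2$; a parity analysis shows $K\colonequals\mathrm{lc}(f)$ divides $2$. Then $|f(\omega)|\ge 1/K$, and distributing this product over the $S_3$-orbits gives a per-orbit contribution
\[
\Pi(x,y)\ =\ \frac{|x^2+x+1|\,|y^2+y+1|\,|x^2+xy+y^2|}{|x|^2|y|^2}.
\]
Bounding $\Pi(x,y)$ from above, subject to $1+x^a+y^a=0$ and the constraint $|\zeta|\ge|\eta|\ge 1$ (which forces $\eta^a\approx-\zeta^a$, i.e., $|\eta|=|\zeta|(1+O(r^{-a}))$), should yield a lower bound $r>r_0$; in case~\eqref{theorem:abc:almost:all:2} a sharper analysis (using that one of $P_b, P_c$ then has odd leading coefficient) produces an absolute $\log r_0\gtrsim 1$, while in case~\eqref{theorem:abc:almost:all:1} the constraint only forces $\log r_0\gtrsim 2^{-a}$.

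With this lower bound in hand, I would deploy the LMN machinery (Lemma~\ref{lemma:LMN}). The identity $\eta^{bc}=(-1-\zeta^b)^c=(-1-\zeta^c)^b$ gives, on taking logarithms,
\[
c\,\log(1+\zeta^{-b})\,-\,b\,\log(1+\zeta^{-c})\ \in\ \pi i\,\ZZ,
\]
and the analog of Lemma~\ref{lemma:c:small} forces this quantity to vanish whenever $c\le(\pi/2)r^b$, giving $(1+\zeta^{-b})^c=\pm(1+\zeta^{-c})^b$. From this I would extract a unit-modulus algebraic number $\alpha$ (for instance via a suitable normalization of $\eta/\zeta$ using $\eta^a=-1-\zeta^a$) of degree $d\le ab$ and height satisfying $d\,h(\alpha)\le\log K+(d/3)\log r$ (the one-third reflecting the $S_3$-orbit structure, as roughly one-third of the Galois conjugates of $\alpha$ exceed modulus one). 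Lemma~\ref{lemma:LMN} applied with $k=2c$ then gives a lower bound on $|\alpha^{2c}-1|$, which combined with the upper bound $|\alpha^{2c}-1|\le 2/r^c$ (from $\eta^c=-1-\zeta^c$) yields an inequality of the form $c\le Ca^2b^3(\log c)^2$. Combining with $c\ge(\pi/2)r^b$ produces $b\,\log r\ll\log(ab)$. Substituting the lower bound on $r$ yields \eqref{theorem:abc:almost:all:1} and \eqref{theorem:abc:almost:all:2}; for \eqref{theorem:abc:almost:all:3}, fixing $b$ in the allowed range bounds $c$ directly via the same inequality.

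The main technical hurdle is obtaining the precise dependence of $r_0$ on $a$, specifically justifying the factor $2^a$ in case~\eqref{theorem:abc:almost:all:1}: the upper bound on $\Pi(x,y)$ has to be analyzed carefully across the various orbit geometries compatible with the Fermat-type constraint $1+x^a+y^a=0$, and the absence of an analog of the clean three-curve structure of Lemma~\ref{lemma:trivial:factors} complicates the bookkeeping. A secondary difficulty lies in handling the exceptional small-$b$ cases that fall outside the LMN-effective regime, which, as in the end of Section~\ref{section:a=1}, will likely require individual verification via irreducibility computations or continued-fraction estimates.
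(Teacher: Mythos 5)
Your overall architecture is aligned with the paper's: eliminate one variable via a resultant to bound the degree of the common roots (the paper's Lemma~\ref{lemma:leading:coefficient}), establish a lower bound on the largest modulus $r$, obtain $c>\pi r^b/2$ from the logarithm identity (Lemma~\ref{lemma:lower:bound:c}), and then pit this against an upper bound for $c$ coming from Laurent--Mignotte--Nesterenko. But the two steps you flag as problematic are precisely where the paper introduces its key devices, and your sketches of them do not close the gaps.

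For the lower bound on $r$, you propose distributing $|f(\omega)|\ge 1/K$ over $S_3$-orbits, the analogue of Lemma~\ref{lemma:14:9}; you then acknowledge you cannot control the orbit geometry well enough to obtain the $2^a$ dependence. The paper sidesteps this entirely. It first proves the elementary estimate of Lemma~\ref{lemma:10:delta}: if $|w|$ and $|1+w|$ both lie in $[e^{-\delta},e^{\delta}]$ with $\delta\le 1/10$, then $|w^2+w+1|\le 10\delta$. Applying this with $w=\alpha^a$ (legitimate since $\beta^a=-1-\alpha^a$ and both have modulus in $[r^{-a},r^a]$), one gets $|\alpha^{2a}+\alpha^a+1|\le 10a\log r$, and moreover $\alpha^{2a}+\alpha^a+1\ne 0$ (this uses $\gcd(a,b,c)=1$ together with Lemma~\ref{lemma:10:delta} applied again to $w=\alpha^b,\alpha^c$). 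Taking the resultant of the primitive minimal polynomial $f$ of $\alpha$ (leading coefficient $1$ or $2$ by Lemma~\ref{lemma:leading:coefficient}) with $g(x)=x^{2a}+x^a+1$ gives $1\le|\Res(f,g)|\le 2^{2a}(10a\log r)^{\deg f}$, and the $2^{2a}$ factor is exactly where the $2^a$ in the statement comes from; when exactly one of $a,b,c$ is even, $f$ is monic and the $2^{2a}$ disappears, giving case~\eqref{theorem:abc:almost:all:2}. This is far cleaner than the orbit-product route, which would require you to carry the full $S_3$-orbit geometry of the Fermat curve $1+x^a+y^a=0$, something you correctly identify as intractable.

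The second gap is the object to which you apply LMN. You speak of ``a suitable normalization of $\eta/\zeta$'' of degree $d\le ab$ with $dh(\alpha)\le\log K+(d/3)\log r$, but for $a\ge 2$ the map $\zeta\mapsto\eta$ has degree $a$ over $\QQ(\zeta)$, so the degree of $\eta/\zeta$ need not be $\le ab$, and no normalization is specified that makes the modulus exactly $1$. The paper instead takes $\alpha$ of \emph{smallest} modulus $1/r$, pairs it with $\beta$, and sets $\zeta:=\beta/\bar\beta$ --- automatically of modulus $1$, of degree at most $(ab)^2$, with $h(\zeta)\le 2h(\beta)\le 2\log(2r)$. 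Crucially, you do not address whether this unit-modulus number is a root of unity, in which case Lemma~\ref{lemma:LMN} cannot be applied. The paper devotes Lemma~\ref{lemma:smallest:absolute} precisely to this, showing that $\zeta$ is not a root of unity once $r^b\ge 2b^8$ (via a $\phi(k)\ge\sqrt{k/2}$ bound), and in the remaining root-of-unity case that $\alpha^c,\beta^c$ are real with $|\beta|^c=1\pm r^{-c}$, so only one $c$ can work; that last dichotomy is exactly what proves part~\eqref{theorem:abc:almost:all:3}. Finally, your closing remark about ``individual verification via irreducibility computations or continued-fraction estimates'' for small $b$ conflates this theorem with Theorem~\ref{theorem:a=1}: Theorem~\ref{theorem:abc:almost:all} is a bound/finiteness result, not a complete resolution, and the paper's proof needs no case checking.
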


Let $\calZ(a,b,c)$ denote the set of all $\alpha\in\CC$, excluding cube roots of unity, for which there exists some $\beta\in\CC$ with
\[
1+\alpha^a+\beta^a\ =\ 1+\alpha^b+\beta^b\ =\ 1+\alpha^c+\beta^c\ =\ 0.
\]

\begin{lemma}
\label{lemma:leading:coefficient}
Suppose that $\gcd(a,b,c)=1$ and that at least one of $a$, $b$, or $c$ is even. If $\alpha \in \calZ(a,b,c)$, then the primitive minimal polynomial of $\alpha$ in $\ZZ[x]$ has degree at most $ab$, and leading coefficient $1$ or $2$. If exactly one of $a$, $b$, or $c$ is even, then the leading coefficient must be $1$, i.e., $\alpha$ is an algebraic integer.
\end{lemma}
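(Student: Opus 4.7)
The plan is to produce explicit polynomials in $\ZZ[x]$ vanishing at $\alpha$, and invoke Gauss's lemma to transfer the information to the primitive minimal polynomial $m(x)$ of $\alpha$. Given $\alpha\in\calZ(a,b,c)$, choose $\beta\in\CC$ with $\beta^i=-1-\alpha^i$ for each $i\in\{a,b,c\}$. For any pair $i<j$ from this set, writing $d\colonequals\gcd(i,j)$, $i'\colonequals i/d$, and $j'\colonequals j/d$, the identity $\beta^{ij/d}=(\beta^i)^{j'}=(\beta^j)^{i'}$ shows that $\alpha$ is a root of
\[
R_{ij}(x)\ \colonequals\ (-1-x^i)^{j'}-(-1-x^j)^{i'}\ \in\ \ZZ[x].
\]
A direct binomial computation shows that the coefficient of $x^i$ in $R_{ij}$ equals $(-1)^{j'}j'\neq 0$ (the monomial $x^i$ cannot appear in the second summand since $i<j$), so $R_{ij}\not\equiv 0$.

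Next I read off the leading term of $R_{ij}$. The two top contributions are $(-1)^{j'}x^{ij/d}$ and $-(-1)^{i'}x^{ij/d}$; since $\gcd(i',j')=1$, the integers $i'$ and $j'$ are both odd precisely when $\nu_2(i)=\nu_2(j)$. When $\nu_2(i)\neq\nu_2(j)$, exactly one of $i',j'$ is even and the top terms reinforce, so $R_{ij}$ has degree $ij/d$ with leading coefficient $\pm 2$. When $\nu_2(i)=\nu_2(j)$, both $i',j'$ are odd, the top terms cancel, and expanding the next-order term (using $i<j$ to select the larger of $ij/d-i$ and $ij/d-j$) shows that $R_{ij}$ has degree $ij/d-i$ with leading coefficient $-j'$, an odd integer.

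By Gauss's lemma, the primitive minimal polynomial $m(x)$ divides each nonzero $R_{ij}$ in $\ZZ[x]$, so the leading coefficient of $m$ divides that of $R_{ij}$. Applying this to $R_{ab}$ yields $\deg m\le ab/\gcd(a,b)\le ab$. The hypothesis $\gcd(a,b,c)=1$ together with at least one $a_i$ being even forces $\{a,b,c\}$ to contain both an even and an odd integer, so some pair has distinct $\nu_2$; for that pair, $R_{ij}$ has leading coefficient $\pm 2$, so the leading coefficient of $m$ divides $2$. When exactly one of $a,b,c$ is even, the remaining two are both odd, so the pair consisting of them has equal $\nu_2=0$ and gives an $R_{ij}$ with odd leading coefficient; combining the two divisibility constraints forces the leading coefficient of $m$ to equal $1$, so $\alpha$ is an algebraic integer. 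The only substantive step is the leading-coefficient computation for $R_{ij}$; the parity case split is then automatic, and the verification that $R_{ij}\not\equiv 0$ is the one point where one must be slightly careful.
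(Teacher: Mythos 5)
Your proof is correct and takes essentially the same approach as the paper: both write down explicit integer polynomials vanishing at $\alpha$ by eliminating $\beta$ from $\beta^i=-1-\alpha^i$, and then compare leading coefficients of these polynomials via Gauss's lemma. Your version divides out by $\gcd(i,j)$, yielding the marginally sharper degree bound $ab/\gcd(a,b)$ and a cleaner case split in terms of $\nu_2$, but the substance (one auxiliary polynomial has leading coefficient $2$, and when exactly one of $a,b,c$ is even another has odd leading coefficient) matches the paper's argument.
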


\begin{proof}
Note that
\[
(1+\alpha^a)^b\ =\ (-\beta^a)^b\ =\ (-1)^b (\beta^b)^a\ =\ (-1)^{b+a} (1+\alpha^b)^a,
\]
and similarly $(1+\alpha^a)^c = (-1)^{a+c} (1+\alpha^c)^a$, and $(1+\alpha^b)^c = (-1)^{b+c} (1+\alpha^c)^b$. Thus, $\alpha$ is a root of the three polynomials
\begin{multline}
\label{eqn:lemma:leading:coefficient}
(1+x^a)^b - (-1)^{a+b}(1+x^b)^a,\quad (1+x^a)^c - (-1)^{a+c} (1+x^c)^a,\\
\text{and} \quad (1+x^b)^c - (-1)^{b+c}(1+x^c)^b.
\end{multline}
It follows that $\alpha$ is an algebraic number of degree at most $ab$. Furthermore, since two of the integers $a$, $b$, $c$ must have opposite parity, one of the displayed polynomials must have leading coefficient~$2$, so the primitive minimal polynomial for $\alpha$ must have leading coefficient $1$ or $2$. Finally, if exactly one of $a$, $b$, $c$ is even, then two of the three polynomials have leading coefficient $2$, and the third has an odd leading coefficient. Therefore, in this case, the primitive minimal polynomial of $\alpha$, which divides all three of the polynomials~\eqref{eqn:lemma:leading:coefficient}, has leading coefficient $1$.
\end{proof}

\begin{lemma}
\label{lemma:10:delta}
Suppose $w$ is a complex number with $e^{-\delta} \le |w| \le e^{\delta}$ and $e^{-\delta} \le |1+w| \le e^{\delta}$, where $0 \le \delta \le 1/10$. Then
\[
|w^2+w+1|\ \le\ 10\delta.
\]
\end{lemma}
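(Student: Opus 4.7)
The plan is to translate the hypothesis into linear control on $\real(w)$ and $|\im(w)|$, expand $w^2+w+1$ into its real and imaginary parts, and conclude by the triangle inequality.

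Set $s\colonequals\log|w|$ and $t\colonequals\log|1+w|$, so $|s|,|t|\le\delta$. The identity $|1+w|^2-|w|^2=1+2\real(w)$ gives
\[
u\colonequals 2\real(w)+1\ =\ e^{2t}-e^{2s},
\]
while $\im(w)^2=e^{2s}-\real(w)^2\le e^{2s}$. Writing $w=R+iI$, a direct expansion shows that $w^2+w+1$ has real part $R^2-I^2+R+1$ and imaginary part $I(2R+1)=I\cdot u$. Using $I^2=e^{2s}-R^2$ to eliminate $I^2$ and $2R^2+R=R(2R+1)=(u-1)u/2$ to rewrite the polynomial in $R$ in favor of $u$, the real part simplifies to $u^2/2-u/2+(1-e^{2s})$.

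By the triangle inequality,
\[
|w^2+w+1|\ \le\ \tfrac{u^2}{2}+\tfrac{|u|}{2}+|e^{2s}-1|+e^s\,|u|.
\]
What remains is a numerical estimate. Since $\sinh(x)/x$ and $(e^x-1)/x$ are increasing on $(0,\infty)$, for $0\le\delta\le 1/10$ one obtains $|u|\le 2\sinh(2\delta)\le 4.03\,\delta$, $|e^{2s}-1|\le e^{2\delta}-1\le 2.22\,\delta$, $e^s\le e^{\delta}\le 1.11$, and $u^2/2\le 8.11\,\delta^2\le 0.82\,\delta$ (the last step using $\delta\le 1/10$). Summing the four terms yields $|w^2+w+1|\le 9.49\,\delta<10\,\delta$.

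The only obstacle is keeping the numerical constants tight enough; there is no conceptual subtlety. The constant $10$ is comfortably loose, since the sharp leading-order bound as $\delta\to 0$ is $2\sqrt{3}\,\delta$, attained at $(s,t)=(\pm\delta,\mp\delta)$, where $w$ moves away from the nearest of $\omega,\bar\omega$ along the direction transverse to both of the circles $\{|z|=1\}$ and $\{|1+z|=1\}$.
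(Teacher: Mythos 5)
Your proof is correct. Both you and the paper start from the same key identity, namely $|1+w|^2 - |w|^2 = 1 + 2\real(w)$, which controls $u = 2\real(w)+1$ in size $O(\delta)$; but from there the two arguments diverge. The paper factors $w^2+w+1 = w\bigl(w + \tfrac{1}{w} + 1\bigr)$ and splits $w+\tfrac1w+1 = (w+\bar w+1) + \bigl(\tfrac1w - \bar w\bigr)$, exploiting the neat cancellation $|w|\,\bigl|\tfrac1w - \bar w\bigr| = \bigl|1-|w|^2\bigr|$; this reduces everything to two terms and yields a cleaner bound, roughly $6.7\,\delta$ at $\delta = 1/10$. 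You instead go fully coordinate-based: solve for $I^2 = e^{2s} - R^2$, rewrite the real part as a polynomial in $u$, and apply the lossy estimate $|z|\le|\real z|+|\im z|$, ending up with four terms and a constant near $9.5\,\delta$. Your bound still clears $10\,\delta$ with room to spare, and your monotonicity argument (via $\sinh(x)/x$ and $(e^x-1)/x$ increasing) correctly reduces the verification to the endpoint $\delta=1/10$. Your closing remark identifying $2\sqrt3\,\delta$ as the sharp leading-order constant, attained when $(s,t)=(\pm\delta,\mp\delta)$ so that $w$ sits near a corner of the small parallelogram around $\omega$, is also correct and is a nice addition not present in the paper. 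In short: same starting identity, different (more computational) packaging, a looser but still sufficient constant.
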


\begin{proof}
By assumption,
\[
|1+w|^2\ =\ 1+w+\bar{w}+|w|^2
\]
lies in the interval $[e^{-2\delta},\ e^{2\delta}]$, so that
\[
|1+w+\bar{w}|\ \le\ \max\{e^{2\delta}-|w|^2,\ |w|^2-e^{-2\delta}\}\ \le\ e^{2\delta}-e^{-2\delta}.
\]
Therefore
\begin{multline*}
|w^2+w+1|\ =\ |w|\Big|w+\frac{1}{w}+1\Big|\ \le\ |w|\Big(\Big|w+\bar{w}+1\Big|+\Big|\frac{1}{w}-\bar{w}\Big|\Big)
\\
\le\ |w|(e^{2\delta}-e^{-2\delta})+\Big|1-|w|^2\Big|\ \le\ e^{\delta}(e^{2\delta}-e^{-2\delta}) + (e^{2\delta}-1),
\end{multline*}
and the lemma follows.
\end{proof}

\begin{lemma}
\label{lemma:largest:absolute}
Suppose $\gcd(a,b,c)=1$ and $2\mid abc$. Suppose $\calZ(a,b,c) \neq \emptyset$, let $r$ denote the largest absolute value of an element of $\calZ(a,b,c)$. Then
\[
r\ \ge\ \exp\Big(\frac{1}{10a2^a}\Big).
\]
If exactly one of $a$, $b$, $c$ is even, then this may be improved to
\[
r\ \ge\ \exp\Big(\frac{1}{10a}\Big).
\]
\end{lemma}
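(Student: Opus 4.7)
The plan is to argue by contradiction.  Suppose $\calZ\colonequals\calZ(a,b,c)$ is nonempty and $r=e^\delta$ with $\delta<1/(10a\cdot 2^a)$; then $a\delta<1/(10\cdot 2^a)\le 1/10$, which is the precise hypothesis under which Lemma~\ref{lemma:10:delta} can be invoked.  My strategy is in two steps: (a) use Lemma~\ref{lemma:10:delta} uniformly over $\calZ$ to produce the pointwise bound $|\alpha^{2a}+\alpha^a+1|\le 10a\delta$ for every $\alpha\in\calZ$; (b) use Lemma~\ref{lemma:leading:coefficient} and a resultant to produce a contradicting lower bound on $\prod_{\alpha\in\calZ}(\alpha^{2a}+\alpha^a+1)$.

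For (a), I first check that $\calZ$ is closed under Galois conjugation, under $\alpha\mapsto 1/\alpha$ (divide the three defining equations by $\alpha^n$), and under partnering $\alpha\leftrightarrow\beta$.  The last closure invokes $\gcd(a,b,c)=1$: if a partner $\beta'$ of some $\alpha\in\calZ$ were a cube root of unity, a short case analysis of the equations $1+\alpha^n+{\beta'}^n=0$ for $n=a,b,c$ would force $\alpha$ to be a cube root of unity as well, contradicting $\alpha\in\calZ$.  These closures guarantee $|\alpha|,|\beta'|\in[1/r,r]$ for every $\alpha\in\calZ$ and its partner $\beta'$, so that $|\alpha^a|$ and $|1+\alpha^a|=|{\beta'}^a|$ both lie in $[e^{-a\delta},e^{a\delta}]$; applying Lemma~\ref{lemma:10:delta} with $w=\alpha^a$ and $\delta_0=a\delta$ then yields the desired pointwise bound.

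For (b), set $f(x)\colonequals\prod_{\alpha\in\calZ}(x-\alpha)\in\QQ[x]$ (monic) and $g(x)\colonequals x^{2a}+x^a+1\in\ZZ[x]$.  Lemma~\ref{lemma:leading:coefficient} gives $2f\in\ZZ[x]$, with $f\in\ZZ[x]$ in the exactly-one-even case.  The key claim is that $f$ and $g$ are coprime: if $\alpha\in\calZ$ satisfied $g(\alpha)=0$, then $\alpha^a\in\{\omega,\omega^2\}$, and the same reasoning applied to the equations for $b$ and $c$ forces $\alpha^b,\alpha^c\in\{\omega,\omega^2\}$, so $\alpha^{3a}=\alpha^{3b}=\alpha^{3c}=1$ and the order of $\alpha$ divides $3\gcd(a,b,c)=3$, contradicting $\alpha\notin\{1,\omega,\omega^2\}$.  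Hence $\Res(2f,g)=2^{2a}\prod_\alpha g(\alpha)$ is a nonzero integer, so $|\prod_\alpha g(\alpha)|\ge 2^{-2a}$.  Combining with the upper bound $(10a\delta)^{|\calZ|}$ from (a) together with $|\calZ|\ge 2$ (since $\zeta\ne\pm 1$---a short case check using $a<b<c$ and $2\mid abc$---so that $\zeta$ and $1/\zeta$ are distinct elements of $\calZ$) yields $10a\delta\ge 2^{-a}$, contradicting $\delta<1/(10a\cdot 2^a)$.  In the exactly-one-even case, $\Res(f,g)\in\ZZ\setminus\{0\}$ has absolute value at least $1$, and the same argument produces the stronger bound $\delta\ge 1/(10a)$.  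I expect the main technical hurdle to be the two closure/coprimality claims, which both rest on the same elementary observation that the only pair of roots of unity summing to $-1$ is $\{\omega,\omega^2\}$, combined with the hypothesis $\gcd(a,b,c)=1$.
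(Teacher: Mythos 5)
Your overall strategy is the same as the paper's: apply Lemma~\ref{lemma:10:delta} with $w=\alpha^a$ to obtain the pointwise bound $|\alpha^{2a}+\alpha^a+1|\le 10a\log r$ on elements of $\calZ(a,b,c)$, show this quantity is nonzero via the cube-roots-of-unity argument, and then use a resultant against $g(x)=x^{2a}+x^a+1$ together with the leading-coefficient information from Lemma~\ref{lemma:leading:coefficient}. Your handling of the preliminaries is in places more careful than the paper's (you explicitly check that the partner $\beta$ of $\alpha$ also has absolute value in $[1/r,r]$, and you make explicit the reduction to the case $a\log r\le 1/10$, both of which the paper leaves tacit).

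There is, however, one genuine gap. You take $f(x)\colonequals\prod_{\alpha\in\calZ}(x-\alpha)$ and assert ``Lemma~\ref{lemma:leading:coefficient} gives $2f\in\ZZ[x]$.'' Lemma~\ref{lemma:leading:coefficient} concerns the primitive minimal polynomial of a \emph{single} element $\alpha$. If $\calZ$ splits into $m$ Galois orbits and each contributes a primitive integral factor $a_i f_i$ with $a_i=2$, then all the lemma directly yields is $2^m f\in\ZZ[x]$, not $2f\in\ZZ[x]$; and if $2f\notin\ZZ[x]$ then $\Res(2f,g)$ need not be an integer and your lower bound of $1$ evaporates. The statement $2f\in\ZZ[x]$ is in fact true, but to justify it one must observe (as in the \emph{proof}, not the statement, of Lemma~\ref{lemma:leading:coefficient}) that $f$ divides one of the integer polynomials $(1+x^a)^b-(-1)^{a+b}(1+x^b)^a$, $(1+x^a)^c-(-1)^{a+c}(1+x^c)^a$, $(1+x^b)^c-(-1)^{b+c}(1+x^c)^b$, at least one of which has leading coefficient $2$; then Gauss's lemma forces the primitive integral scalar multiple of $f$ to have leading coefficient dividing $2$. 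The paper's proof sidesteps this issue entirely by taking $f$ to be the primitive minimal polynomial of a single $\alpha\in\calZ$ (so that Lemma~\ref{lemma:leading:coefficient} applies verbatim) and replacing your use of $|\calZ|\ge 2$ by the equivalent observation that $\deg f\ge 2$ (i.e., no element of $\calZ$ is rational). Either fix is fine, but as written the appeal to Lemma~\ref{lemma:leading:coefficient} does not support the claim $2f\in\ZZ[x]$.
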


\begin{proof}
Note that if $\alpha$ belongs to $\calZ(a,b,c)$, then so does $1/\alpha$. Thus all elements of $\calZ(a,b,c)$ have absolute value
between $1/r$ and $r$.

For $\alpha \in \calZ(a,b,c)$, let $\beta$ be such that $1+\alpha^a+\beta^a =1+\alpha^b+\beta^b= 1+\alpha^c+\beta^c=0$. We know that $\alpha^a$ and $\beta^a$ both have absolute value in the interval $[r^{-a},\ r^a]$. But $\beta^a = -(1+\alpha^a)$, so by Lemma~\ref{lemma:10:delta} we conclude that
\begin{equation}
\label{equation:largest:absolute}
|\alpha^{2a}+\alpha^a+1|\ \le\ 10\log(r^a).
\end{equation}

Next, we claim that $\alpha^{2a}+\alpha^a+1$ cannot equal zero. If it did, then $\alpha^a$ would be a primitive cube root of unity, i.e., $\omega$ or $\omega^2$, and therefore so would $\beta^a$. Now $\alpha^b$ and $\beta^b= -(1+\alpha^b)$ both have absolute value $1$, so that by Lemma~\ref{lemma:10:delta} $\alpha^b$ must be $\omega$ or $\omega^2$. The same conclusion holds for $\alpha^c$. But since $\gcd(a,b,c)=1$, we conclude that $\alpha$ itself must be a cube root of unity, which is not permitted given the definition of $\calZ(a,b,c)$.

Summarizing the argument thus far, if $\alpha\in\calZ(a,b,c)$ then $\alpha$ and all its Galois conjugates satisfy the bound~\eqref{equation:largest:absolute}, and furthermore $\alpha^{2a}+\alpha^a+1 \neq 0$. Let $f(x)$ denote the primitive minimal polynomial for $\alpha$ in $\ZZ[x]$, and set $g(x)\colonequals x^{2a}+x^a+1$. By Lemma~\ref{lemma:leading:coefficient}, the degree $d$ of $f(x)$ is at most $ab$, and its leading coefficient is $1$ or $2$. The resultant of~$f(x)$ and $g(x)$ is a nonzero integer, and therefore
\[
1\ \le\ |\Res(f,g)|\ \le\ 2^{2a}\prod_{\sigma}\Big|\sigma(\alpha)^{2a}+\sigma(\alpha)^{a}+1\Big|\ \le\ 2^{2a}(10a\log r)^{d},
\]
where $\sigma(\alpha)$ are the Galois conjugates of $\alpha$, and we have used~\eqref{equation:largest:absolute} for the upper bound. Since $d$ must be at least $2$, the first bound of the lemma follows. If exactly one of $a$, $b$, $c$ is even, then $f(x)$ is monic, and the improved bound holds.
\end{proof}

\begin{lemma}
\label{lemma:lower:bound:c}
Suppose $\gcd(a,b,c)=1$ and $2\mid abc$. Suppose $\calZ(a,b,c) \neq \emptyset$, let $r$ be the largest absolute value of an element of $\calZ(a,b,c)$. Then $c$ must be larger than $\pi r^b/2$.
\end{lemma}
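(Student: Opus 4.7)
The plan is to mimic the proof of Lemma~\ref{lemma:c:small}, generalizing from the $a=1$ setting to arbitrary $a$. The key observation is that the central algebraic identity carries over. Fix $\zeta\in\calZ(a,b,c)$ with $|\zeta|=r$, and let $\eta$ be a partner, so that $1+\zeta^n+\eta^n=0$ for $n\in\{a,b,c\}$. Note that $\calZ(a,b,c)$ is closed under $\alpha\mapsto1/\alpha$: dividing through by $\alpha^n$ shows that $(1/\alpha,\beta/\alpha)$ solves the system whenever $(\alpha,\beta)$ does, so $1/\zeta$ also lies in $\calZ(a,b,c)$. Using only the two relations $\eta^b=-1-\zeta^b$ and $\eta^c=-1-\zeta^c$, equating $(\eta^b)^c=(\eta^c)^b$ yields $(-1-\zeta^b)^c=(-1-\zeta^c)^b$, and applying this to $1/\zeta$ in place of $\zeta$ produces the identity $(-1-\zeta^{-b})^c=(-1-\zeta^{-c})^b$, exactly as in Lemma~\ref{lemma:c:small}.

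From here the argument mirrors the earlier proof: take principal logarithms, write $\log(-1-w)=\pi i+\log(1+w)$, observe that the contribution $(b-c)\pi i$ lies in $\pi i\ZZ$, and expand via the Mercator series to obtain
\[
\sum_{\ell=1}^\infty\frac{(-1)^{\ell-1}}{\ell}\left(\frac{c}{\zeta^{b\ell}}-\frac{b}{\zeta^{c\ell}}\right)\ \in\ \pi i\ZZ.
\]
I would then assume for contradiction that $c\le\pi r^b/2$, bound this sum above by $c(1+1/r)/(r^b-1)$ via the triangle inequality, and bound it below by $(c/r^b)(1-1/r-1/(r^b-1))$ by isolating the $\ell=1$ term. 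If the upper bound is strictly less than $\pi$, then the integer multiple of $\pi i$ is forced to be $0$, contradicting the strict positivity of the lower bound.

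The main difficulty, as opposed to the $a=1$ case, is that we do not have the convenient bound $r>14/9$ supplied by Lemma~\ref{lemma:14:9}: Lemma~\ref{lemma:largest:absolute} only gives the much weaker $r\ge\exp(1/(10a\cdot2^a))$, which can be close to $1$. Substituting $c\le\pi r^b/2$ into the upper bound and simplifying, the sufficient condition for the sum to have absolute value less than $\pi$ reduces to $r^{b-1}(r-1)>2$; this inequality, and the parallel one needed to make the lower bound strictly positive, is what I expect to be the crux. In the complementary range $r^{b-1}(r-1)\le2$, one has $r\le 2$ (otherwise the inequality is automatic for $b\ge 3$) and $r^b\le2r/(r-1)$, so $\pi r^b/2$ is bounded in terms of $1/(r-1)$; combined with the lower bound on $r-1$ from Lemma~\ref{lemma:largest:absolute} and the integrality constraints $\gcd(a,b,c)=1$, $2\mid abc$, and $c>b>a\ge 2$, the conclusion $c>\pi r^b/2$ should follow by a direct verification.
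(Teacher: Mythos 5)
Your reproduction of the paper's mechanism is faithful: the identity $(-1-\zeta^{-b})^c = (-1-\zeta^{-c})^b$ via closure of $\calZ(a,b,c)$ under $\alpha\mapsto 1/\alpha$, the Mercator expansion putting the logarithm in $\pi i\ZZ$, and the two triangle-inequality bounds are exactly what the paper intends (its proof reads, in full, ``the argument is identical to the proof of Lemma~\ref{lemma:c:small}''). You have also correctly isolated the real subtlety that this terse phrasing glosses over: forcing the upper bound $c(1+1/r)/(r^b-1)$ below $\pi$ under $c\le \pi r^b/2$ is equivalent to $r^{b-1}(r-1)>2$, and the same threshold (essentially) governs positivity of the lower bound. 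In the $a=1$ setting this is guaranteed by Lemma~\ref{lemma:14:9} ($r>14/9$, $b\ge 6$), but Lemma~\ref{lemma:largest:absolute} only gives $r\ge\exp\big(1/(10a2^a)\big)$, which can put $r$ arbitrarily close to $1$. So far so good, and your diagnosis is more careful than the paper's one-line proof.

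Where the proposal falls short is in the complementary range $r^{b-1}(r-1)\le 2$. You correctly deduce $r\le 2$ and $r^b\le 2r/(r-1)$, hence $\pi r^b/2\le 2\pi/(r-1)\le 20\pi a 2^a$ after inserting the lower bound on $r-1$; but you then assert $c>\pi r^b/2$ ``should follow by direct verification'' from the constraints $\gcd(a,b,c)=1$, $2\mid abc$, $c>b>a\ge 2$. Those constraints only give $c\ge a+2$, which is nowhere near $20\pi a2^a$, so this step is not a verification but an unproved claim, and I do not see how to establish it from the stated hypotheses alone. It is worth noting that in the one place the paper actually applies this lemma (inside the proof of Theorem~\ref{theorem:abc:almost:all}), it has already secured $r^b\ge 2b^8$; from this, $r^{b-1}(r-1)=r^b\cdot(r-1)/r\ge b^7\log(2b^8)\gg 2$ when $1<r<2$, and $r^{b-1}(r-1)\ge 2^{b-1}>2$ when $r\ge 2$, so the ``identical'' computation genuinely goes through there. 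In other words, either the lemma ought to carry an explicit largeness hypothesis such as $r^b\ge 2b^8$ (satisfied at the point of use), or the complementary range requires an argument you have not yet supplied; as written, your proof has the same gap the paper's brevity conceals, and the final ``direct verification'' does not close it.
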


\begin{proof}
The argument is identical to the proof of Lemma~\ref{lemma:c:small}.
\end{proof}

\begin{lemma}
\label{lemma:smallest:absolute} Suppose $\gcd (a,b,c)=1$ and $2\mid abc$. 
Let $\alpha$ denote an element of $\calZ(a,b,c)$ with smallest absolute value, which is~$1/r$. Let $\beta$ be such that
\[
1+\alpha^a+\beta^a\ =\ 1+\alpha^b+\beta^b\ =\ 1+\alpha^c+\beta^c\ =\ 0.
\]
Then $\zeta\colonequals\beta/\bar{\beta}$ is an algebraic number of degree at most $(ab)^2$, with absolute height
\[
h(\zeta)\ \le\ 2 \log(2r).
\]
If $2b^8 \le r^b$, then $\zeta$ is not a root of unity. If $\zeta$ is a root of unity, then either $r^c < 2b^8$, or $\alpha^c$ and $\beta^c$ are both real numbers.
\end{lemma}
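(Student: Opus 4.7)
The plan splits into four parts, all using one key estimate.

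\textbf{Degree and height.} By symmetry of the defining system, $(\beta,\alpha)$ is also a solution, so $\beta \in \calZ(a,b,c) \cup \{\omega, \omega^2\}$; Lemma~\ref{lemma:leading:coefficient} then bounds $[\QQ(\beta) : \QQ] \le ab$, and since $\bar\beta$ shares a $\QQ$-minimal polynomial with $\beta$, $[\QQ(\beta, \bar\beta) : \QQ] \le (ab)^2$. This gives the degree bound for $\zeta \in \QQ(\beta,\bar\beta)$. For the height, I would use subadditivity, $h(\zeta) \le h(\beta) + h(\bar\beta^{-1}) = 2 h(\beta)$: the primitive minimal polynomial of $\beta$ has leading coefficient at most $2$, and all Galois conjugates of $\beta$ lie in $\calZ(a,b,c) \cup \{\omega,\omega^2\}$ with absolute value at most $r$, giving $h(\beta) \le \log(2r)$.

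\textbf{Key estimate and both root-of-unity claims.} From $\beta^k = -(1+\alpha^k)$ and its complex conjugate, for $k \in \{a,b,c\}$,
\[
\zeta^k \;=\; \frac{1+\alpha^k}{1+\bar\alpha^k}, \qquad |\zeta^k - 1| \;\le\; \frac{2}{r^k - 1},
\]
using $|\alpha| = 1/r$. If $\zeta$ has order $N$, then $\zeta^k$ has order $M_k = N/\gcd(N,k)$; when $M_k \ge 2$, the gap estimate $|\zeta^k - 1| \ge 4/M_k$ gives $M_k \ge 2(r^k - 1)$, while $\phi(M_k) \mid \phi(N) \le (ab)^2$ combined with $\phi(M) \ge \sqrt M$ (for $M \notin \{2,6\}$, small cases handled directly) gives $M_k \le (ab)^4 \le b^8$. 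Applied at $k = c$: either $M_c = 1$, so $\zeta^c = 1$ and hence $\beta^c = \bar\beta^c \in \RR$ and $\alpha^c = -1-\beta^c \in \RR$; or $r^c < 2b^8$. This proves the second claim. For the first, the hypothesis $r^b \ge 2b^8$ together with $r \ge 1$ and $c \ge b$ yields $r^c \ge 2b^8$, so the estimate at $k = b,c$ forces $\zeta^b = \zeta^c = 1$; hence $N \mid \gcd(b,c)$ and $\gcd(a,N) = 1$. If $N = 1$, then $\beta$ is real, and the parity $2 \mid abc$ supplies an even exponent $k \in \{a,b,c\}$ forcing $\beta^k \ge 0$ and thus $\alpha^k \le -1$, contradicting $|\alpha^k| = r^{-k} < 1$. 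If $N > 1$, then $\zeta^a$ is a primitive $N$-th root of unity, and applying the estimate at $k = a$ together with $N \le \gcd(b,c) \le b$ and the lower bound $r^a \ge (2b^8)^{a/b}$ implied by the hypothesis produces the required contradiction.

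\textbf{Main obstacle.} I expect the last step to be the hardest: one must balance the degree bound $\phi(N) \le (ab)^2$, the divisibility $N \mid \gcd(b,c)$, and the $r^b$-hypothesis so that no nontrivial $N$ survives. The constant $2b^8$ in the statement is calibrated precisely so that these three ingredients intersect tightly.
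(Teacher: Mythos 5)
Your outline for the degree and height bounds matches the paper's proof essentially verbatim: $\beta$ has degree at most $ab$ (by Lemma~\ref{lemma:leading:coefficient}), so $\zeta=\beta/\bar\beta$ has degree at most $(ab)^2$, and $h(\zeta)\le h(\beta)+h(\bar\beta)\le 2\log(2r)$. Your derivation of the second dichotomy (either $r^c<2b^8$ or $\alpha^c,\beta^c\in\RR$) via the estimate $|\zeta^k-1|\le 2/(r^k-1)$, the cyclotomic gap $|\zeta^k-1|\ge 4/M_k$, and the bound $\phi(M_k)\le\phi(N)\le(ab)^2$ is presentationally a little different from the paper (which expands $|1+\beta^b|^2$ directly and uses $\phi(k)\ge\sqrt{k/2}$ to avoid special-casing $M\in\{2,6\}$), but it is a correct and equivalent route.

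The genuine gap is in the final step, for the subcase $N>1$ when $r^b\ge 2b^8$. You have correctly shown $\zeta^b=\zeta^c=1$, so $N\mid\gcd(b,c)$ and $\gcd(a,N)=1$, but the proposed contradiction at $k=a$ does not close: from $N\ge 2(r^a-1)$, $N\le b$, and $r\ge(2b^8)^{1/b}$ you would need $2\bigl((2b^8)^{a/b}-1\bigr)>b$, which fails whenever $a$ is small and $b$ is large (e.g.\ $a=2$, $b$ large: $(2b^8)^{2/b}\to1$, so the left side tends to $0$). Concretely, $a=2$, $b=99$, $N=3$, $r\approx 1.46$ satisfies all your constraints simultaneously, so nothing in your plan rules this case out. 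The ingredient you are missing is that once $\zeta^b=\zeta^c=1$ you need no information about $N$ at all: the numbers $\alpha^b,\beta^b,\alpha^c,\beta^c$ are real, $|\alpha^k|=r^{-k}<1$, so $|\beta|^b=1+\alpha^b$ and $|\beta|^c=1+\alpha^c$ (both positive). If $|\beta|\ge 1$ then $\alpha^b=r^{-b}$ and $\alpha^c=r^{-c}$, and $|\beta|^c\ge|\beta|^b=1+r^{-b}>1+r^{-c}=|\beta|^c$, a contradiction; if $|\beta|<1$ the inequalities reverse and again give a contradiction. This handles $N=1$ and $N>1$ uniformly, and is the paper's argument. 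Your separate parity argument for $N=1$ is correct and a nice shortcut, but it cannot be extended to $N>1$, which is precisely where the "balance" you flagged as the main obstacle breaks down.
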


\begin{proof}
Since $\beta$ is an algebraic number with degree at most $ab$ by Lemma~\ref{lemma:leading:coefficient}, it follows that $\zeta=\beta/\bar{\beta}$ has degree at most $(ab)^2$. As $\beta$ has a primitive minimal polynomial with leading coefficient at most $2$, and since all its Galois conjugates have absolute value at most $r$, we see that $h(\beta) \le \log (2r)$. Now
\[
h(\zeta)\ =\ h(\beta/\bar{\beta})\ \le\ h(\beta) + h(\bar{\beta})\ \le\ 2\log (2r).
\]

It remains to justify the assertions about when $\zeta$ can be a root of unity. Suppose that it is, write $\beta=|\beta|e^{\pi i\ell/k}$ where $\ell/k$ is a reduced fraction. Then $\zeta=e^{2\pi i\ell/k}$ is a primitive $k$-th root of unity.

Suppose that $b$ is not a multiple of $k$. Then
\begin{multline*}
r^{-2b}\ =\ |1+\beta^b|^2\ =\ 1+|\beta|^{2b}+2|\beta|^b\cos(\pi\ell b/k)\ \ge\ (1+|\beta|^{2b})(1-|\cos(\pi\ell b/k)|)\\
\ge\ (1-\cos(\pi/k))\ >\ k^{-2},
\end{multline*}
so that $k > r^{b}$. However the degree of $\zeta$ is $\phi(k)$, which is at most $(ab)^2$. Now $\phi(k) \ge \sqrt{k/2}$ for all integers $k$, so
\[
r^b\ <\ k\ \le\ 2\phi(k)^2\ \le\ 2(ab)^4\ <\ 2b^8. 
\]
In other words, if $r^b \ge 2b^8$ then $b$ must be a multiple of $k$. The same argument shows that if $r^c \ge 2b^8$ then $c$ is a multiple of $k$.

If $b$ is a multiple of $k$, then $\beta^b$ is real, which forces $\alpha^b$ to also be real. Similarly, if $c$ is a multiple of $k$, then $\beta^c$ and $\alpha^c$ are once again real numbers. The last assertion of the lemma is immediate. 

Finally if $r^b \ge 2b^8$, then our argument so far shows that $b$ and $c$ are multiples of $k$. Now we must have $|\beta|^b = 1+\alpha^b$, and $|\beta|^c = 1+\alpha^c$, so that $\alpha^b$ and $\alpha^c$ must be real numbers (of absolute value $r^{-b}$ and $r^{-c}$ respectively). If $|\beta| \ge 1$, then $\alpha^b = r^{-b}$ and $\alpha^c = r^{-c}$. However,
\[
|\beta|^c\ \ge\ |\beta|^b\ =\ 1+r^{-b}\ >\ 1+r^{-c}\ =\ |\beta|^c
\]
yields a contradiction. Similarly, if $|\beta|<1$, then $\alpha^b = -r^{-b}$ and $\alpha^c = -r^{-c}$, and
\[
|\beta|^b\ >\ |\beta|^c\ =\ 1-r^{-c}\ >\ 1-r^{-b}\ =\ |\beta|^b
\]
gives a contradiction. Thus, in this situation $\zeta$ cannot be a root of unity, and this completes the proof of the lemma.
\end{proof}

\begin{proof}[Proof of Theorem~\ref{theorem:abc:almost:all}]
We begin by proving the first two parts of the theorem. We assume that $\calZ(a,b,c) \neq \emptyset$, and note that Lemma~\ref{lemma:largest:absolute} gives a lower bound for the largest absolute value $r$ of an element of $\calZ(a,b,c)$. We assume that $b$ is at least $600 a^2 2^a$ or $600a^2$, depending on whether we seek to establish~\eqref{theorem:abc:almost:all:1} or ~\eqref{theorem:abc:almost:all:2}, and work towards a contradiction. Using the lower bounds for $r$ from Lemma~\ref{lemma:largest:absolute} in the respective cases, we see that $r^b \ge 2 b^8$. Hence, taking $\alpha$, $\beta$, $\zeta$ as in Lemma~\ref{lemma:smallest:absolute}, we see that $\zeta$ is not a root of unity. Since $\beta^c = -(1+\alpha^{c})$, we have
\[
\zeta^c\ =\ \frac{\beta^c}{\bar{\beta}^c}\ =\ \frac{1+\alpha^c}{1+\bar{\alpha}^c},
\]
and so
\begin{equation}
\label{equation:final:proof:1}
|\zeta^c-1|\ \le\ \frac{2r^{-c}}{1-r^{-c}}\ \le\ 3 r^{-c}
\end{equation}
since $r^c>r^b>3$. On the other hand, from Lemma~\ref{lemma:LMN} and Lemma~\ref{lemma:smallest:absolute} we know that
\[
|\zeta^c-1|\ \ge\ \exp\Big(-\frac{9}{8}\big(70+(ab)^2 2\log(2r)\big)(ab)^4(\log c)^2\Big).
\]
Since $ab\ge 100$, we may simplify the above to
\[
|\zeta^{c}-1|\ \ge\ \exp\Big(-(ab)^6 (2+3\log r) (\log c)^2\Big).
\]
Combining this with~\eqref{equation:final:proof:1}, we conclude that
\begin{equation}
\label{equation:final:proof:2}
\frac{c}{(\log c)^2}\ \le\ 3(ab)^6 \Big(1+\frac{1}{\log r}\Big).
\end{equation}

On the other hand, $c\ge \pi r^b/2$ by Lemma~\ref{lemma:lower:bound:c}. Since $r^b \ge 10$, we have $c/(\log c)^2 \ge r^b/(b\log r)^2$, which along with \eqref{equation:final:proof:2} gives
\[
r^b\ \le\ 3 a^6 b^8 \log r (1 +\log r).
\]
Since $b\ge 600 a^2$, we find
\begin{equation*} 
r^{b/2}\ \ge\ \frac{(b\log r)^{11}}{2^{11} \cdot 11!}\ \ge\ \frac{b^8 (\log r)^{11}}{2^{11} \cdot 11!} (600a^2)^3
\ >\ a^6 b^8 \frac{(\log r)^{11}}{380},
\end{equation*} 
and combining this with our upper bound on $r^b$, we conclude that
\[
r^{b/2}\ <\ 1140(\log r)^{-10} (1 + \log r).
\]
In other words,
\[
b\ <\ \frac{2}{\log r}\log\big(1140 (\log r)^{-10} (1 + \log r)\big). 
\]
Inserting here the bounds from Lemma~\ref{lemma:largest:absolute} which give $\log r \ge (10 a 2^{a})^{-1}$ in case~\eqref{theorem:abc:almost:all:1} and $\log r \ge (10a)^{-1}$ in case~\eqref{theorem:abc:almost:all:2}, we obtain the desired contradiction. 

It remains lastly to establish~\eqref{theorem:abc:almost:all:3}. Fix $a$ and $b$ with $2\le a < b < 600 a^2 2^a$. We wish to show that if $c$ is sufficiently large, with $2\mid abc$ and $\gcd(a,b,c)=1$, then $\calZ(a,b,c) = \emptyset$. First note that any $\alpha\in\calZ(a,b,c)$ is a root of the polynomial
\[
(1+x^a)^b -(-1)^{a+b} (1+x^b)^a
\]
by~\eqref{eqn:lemma:leading:coefficient}, and thus lies in a set of size at most $ab$. Let $\alpha$, $\beta$, $\zeta$, and $r$ be as in Lemma~\ref{lemma:smallest:absolute}, and assume that $c \ge 600 a^2 2^a$ so that $r^c \ge 2 c^8 \ge 2b^8$. If $\zeta$ is not a root of unity, then our earlier argument invoking Lemma~\ref{lemma:LMN} applies, and yields the upper bound~\eqref{equation:final:proof:2}, which shows that there are at most finitely many possibilities for $c$. 
Finally, if $\zeta$ is a root of unity, then the last assertion of Lemma~\ref{lemma:smallest:absolute} yields that $\alpha^c$ and $\beta^c$ are real with $1+\alpha^c+\beta^c=0$. Since $|\alpha|= r^{-1} <1$, this equation may be written as $|\beta|^c = 1 + r^{-c}$ if $|\beta| >1$, and as $|\beta|^c = 1 - r^{-c}$ if $|\beta| <1$. Given $\alpha$ and $\beta$, there can be at most one solution $c$ to these equations. Finally, since $\alpha$ and $\beta$ are elements of the finite set of roots of the polynomial $(1+x^a)^b -(-1)^{a+b} (1+x^b)^a$, there are only finitely many possibilities for $c$. 
\end{proof}

\section*{Acknowledgments}

The use of the computer algebra systems \texttt{Macaulay2}~\cite{Macaulay2} and \texttt{Magma}~\cite{Magma} is gratefully acknowledged.


\end{document}